\newcommand{\OO}{\mathcal O}
\newcommand{\EE}{\mathcal E}
\newcommand{\Z}{\mathbb Z}
\newcommand{\N}{\mathbb N}
\newcommand{\Hom}{\operatorname{Hom}}
\newcommand{\End}{\operatorname{End}}
\newcommand{\GL}{\operatorname{GL}}
\newcommand{\id}{\operatorname{id}}
\newcommand{\Ext}{{\operatorname{Ext}}}
\newcommand{\Aut}{\operatorname{Aut}}
\newcommand{\Out}{\operatorname{Out}}
\newcommand{\Outcent}{\operatorname{Outcent}}
\newcommand{\HHH}{\operatorname{HH}}
\newcommand{\rank}{\operatorname{rank}}
\newcommand{\Pic}{{\operatorname{Pic}}}
\newtheorem{defi}{Definition}[section]
\newtheorem{thm}[defi]{Theorem}
\newtheorem{lemma}[defi]{Lemma}
\newtheorem{corollary}[defi]{Corollary}
\newtheorem{notation}[defi]{Notation}
\newtheorem{prop}[defi]{Proposition}
\title{On the geometry of lattices and finiteness of Picard groups}
\author{Florian Eisele}
\address{Florian Eisele, School of Mathematics and Statistics,
	University of Glasgow,
	University Place,
	Glasgow G12 8QQ,
	United Kingdom
}
\email{florian.eisele@glasgow.ac.uk}
\urladdr{https://feisele.github.io}
\newtheoremstyle{named}{}{}{\itshape}{}{\bfseries}{.}{.5em}{\thmnote{#3}}
\theoremstyle{named}
\newtheorem*{namedtheorem}{Theorem}
\renewcommand{\leq}{\leqslant}
\renewcommand{\geq}{\geqslant}
\subjclass{20C20, 20C11, 16G30}
\begin{document}
	\begin{abstract}
		Let $(K,\OO, k)$ be a $p$-modular system with $k$ algebraically closed and $\OO$ unramified, and let $\Lambda$ be an $\OO$-order in a separable $K$-algebra. We call a $\Lambda$-lattice $L$ rigid if $\Ext^1_{\Lambda}(L,L)=0$, in analogy with the definition of rigid modules over a finite-dimensional algebra. 
		By partitioning the $\Lambda$-lattices of a given dimension into ``varieties of lattices'', we show that 
		there are only finitely many rigid $\Lambda$-lattices $L$ of any given dimension. As a consequence we show that if 
		the first Hochschild cohomology of $\Lambda$ vanishes, then the Picard group and the outer automorphism group of $\Lambda$ are finite. In particular the Picard groups of blocks of finite groups defined over $\OO$ are always finite.
	\end{abstract}

	\maketitle
	
		\vspace{-0.7cm}
	
	\section{Introduction}

	Let $k=\bar k$ be an algebraically closed field of characteristic $p>0$. Let 
	$\OO=W(k)$ be the ring of Witt vectors over $k$, and denote by $K$ the field of fractions of $\OO$. In the representation theory of a finite group $G$ over either of these rings, permutation modules, $p$-permutation modules and endo-permutation modules play a pivotal role. However, to even define permutation modules, one needs to know a group basis of the group ring, a piece of information which is lost when passing to isomorphic or Morita equivalent algebras. Recovering the information lost when forgetting the group basis, or at least quantifying the loss, is a fundamental problem in modular representation theory. This is, for instance, the problem once faces when trying to bridge the gap between Donovan's and Puig's respective conjectures. There are scant results in this direction, apart from Weiss' seminal theorem \cite{WeissRigidity}, which gives a criterion for a lattice to be $p$-permutation requiring only limited knowledge of a group basis (and in some cases none at all). In the present article we study the following property of lattices over an $\OO$-order $\Lambda$ such that $A=K\otimes_\OO \Lambda$ is a separable $K$-algebra, which of course includes lattices over a finite group algebra $\OO G$:
	\begin{defi}
		A $\Lambda$-lattice $L$ is called \emph{rigid} if $\Ext^1_\Lambda(L,L)=0$. 
	\end{defi}
	First and foremost we should point out that permutation lattices over a finite group algebra $\OO G$ are rigid in this sense. The notion of rigid modules over finite-dimensional {$k$-algebras} is widely known and well-studied (see for example \cite{DonaldDeformations, DadeAlgRigid}), but unfortunately permutation $kG$-modules and their ilk usually do not have that property. To see why permutation \emph{lattices} do, we can use a well-known alternative characterisation of rigidity in terms of endomorphism rings, which works for arbitrary $\Lambda$-lattices $L$. Consider the following long exact sequence obtained by applying $\Hom_\Lambda(L,-)$ to the short exact sequence ${0\rightarrow L\rightarrow L \rightarrow L/pL\rightarrow 0}$:
	\begin{equation}
		0 \longrightarrow \End_{\Lambda}(L) \stackrel{p}{\longrightarrow} \End_{\Lambda}(L)\longrightarrow\End_{\Lambda}(L/pL) \longrightarrow \Ext^1_\Lambda(L,L) \stackrel{p}{\longrightarrow} \Ext^1_\Lambda(L,L).
	\end{equation}
	As $A$ is separable, we must have $K\otimes_\OO \Ext^1_{\Lambda}(L,L)\cong \Ext^1_{A} (K\otimes_\OO L, K\otimes_\OO L)=0$, meaning that $\Ext^1_{\Lambda}(L,L)$ is a finitely generated torsion $\OO$-module. By the Nakayama lemma $\Ext^1_{\Lambda}(L,L)$ is zero if and only if multiplication by $p$ is injective on it, which, by exactness, happens if and only if the reduction map 
	\begin{equation}
	\End_\Lambda(L)\longrightarrow \End_{\Lambda}(L/pL)
	\end{equation}
	is surjective. 
	  Now if $R[H\backslash G]$ is a permutation module over an arbitrary commutative ring $R$, then its endomorphisms can be identified with sums $\sum_{h\in HgH} h$ over double cosets $HgH \in H\backslash G /H$. That clearly implies the (well-known) fact that the reduction map $\End_{\OO G}(\OO[H\backslash G]) \longrightarrow \End_{kG}(k[H\backslash G])$ is surjective, which by the above implies that $\OO[H \backslash G]$ is rigid. 
	
	After this short digression on permutation lattices let us state our first main result, which is that rigid lattices enjoy the same discreteness property as rigid modules over finite-dimensional algebras.
	\begin{namedtheorem}[Theorem A]\label{thm rigid main}
		For every $n\in \N$ there are at most finitely many isomorphism classes of rigid $\Lambda$-lattices of $\OO$-rank at most $n$.
	\end{namedtheorem}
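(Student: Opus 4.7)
The plan is to imitate the classical proof that a finite-dimensional algebra over a field has only finitely many rigid modules of a given dimension. In that setting one considers the representation variety, on which $\GL_n$ acts by conjugation with orbits equal to isomorphism classes; one then shows that the codimension of an orbit equals $\dim\Ext^1(M,M)$, so rigid modules give open orbits, and concludes by Noetherianity. The task is to construct a workable analogue in the $\OO$-setting.

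As a first reduction, since $A=K\otimes_{\OO}\Lambda$ is finite-dimensional and separable, it has only finitely many isomorphism classes of modules of $K$-dimension at most $n$. It therefore suffices to fix an $A$-module $V$ and bound the number of isomorphism classes of rigid $\Lambda$-lattices $L$ with $K\otimes_{\OO}L\cong V$. Any such $L$ may, after scaling by a power of $p$ (which does not change its isomorphism type), be arranged to satisfy $p^NL_0\subseteq L\subseteq L_0$ for a fixed reference lattice $L_0\subset V$ and some $N$ depending on $L$.

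For each $N$ I would then introduce a ``variety of lattices'' $X_{V,N}$ whose points are the $\Lambda$-stable full $\OO$-lattices $L$ with $p^NL_0\subseteq L\subseteq L_0$. Concretely, such $L$ correspond to certain $\OO/p^{N}$-submodules of $L_0/p^{N}L_0$ of the appropriate elementary-divisor type, and the $\Lambda$-stability is a closed condition; this exhibits $X_{V,N}$ as a closed subvariety of a Grassmannian, in particular as a projective scheme over $k$. The partition alluded to in the abstract is then the decomposition of all $\Lambda$-lattices into the pieces $X_{V,N}$ as $V$ and $N$ vary. The stabiliser-type group $\Aut_A(V)\cap\Aut_{\OO}(L_0)$ (or a suitable algebraic enlargement) acts on $X_{V,N}$, and two lattices are isomorphic as $\Lambda$-modules exactly when they lie in the same orbit.

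The core geometric step is a deformation-theoretic calculation: identify the cokernel of the map $T_L(\text{orbit})\hookrightarrow T_L X_{V,N}$ with (an invariant of) $\Ext^1_{\Lambda}(L,L)$, so that rigidity of $L$ forces the orbit through $L$ to be open in $X_{V,N}$. Noetherianity of $X_{V,N}$ then bounds the number of open orbits, and running over the finitely many $V$ and the uniformly bounded range of $N$ (controlled by the rank of $L$ and the order of $L_0/pL_0$) yields the theorem. The main technical obstacle is this tangent-space computation: over a field the relation between orbit codimension and $\Ext^1$ is classical, but here one is working with $\OO$-lattices inside a $K$-vector space, so the argument must be adapted to account for the fact that $\Ext^1_\Lambda(L,L)$ is a torsion $\OO$-module, and one has to relate it carefully to the $k$-variety structure on $X_{V,N}$; the surjectivity criterion $\End_\Lambda(L)\twoheadrightarrow\End_{\Lambda}(L/pL)$ established in the introduction is likely the right linearised form of the argument.
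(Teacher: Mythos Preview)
Your high-level plan---parametrise lattices by a $k$-variety, show rigidity forces openness, finish by Noetherianity---is indeed the paper's plan. But the route you sketch diverges from the paper's in a substantive way, and two of your steps contain genuine gaps.

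\emph{Orbits versus isomorphism classes.} You want to act by (an enlargement of) $\Aut_A(V)\cap\Aut_{\OO}(L_0)=\Aut_\Lambda(L_0)$ and identify orbits on $X_{V,N}$ with isomorphism classes. This fails: if $L,L'\subseteq L_0$ are isomorphic $\Lambda$-lattices, the conjugating element lies in $\Aut_A(V)$ but need not preserve $L_0$, so isomorphism classes are in general strictly larger than orbits. The paper avoids group actions altogether. Its ``smooth families'' are parametrisations: one conjugates a fixed representation by an upper-triangular matrix with generic entries (Lemma~\ref{lemma sublattice fams}), obtaining a closed set $\mathcal X\subset\mathbb A^{n\cdot l}$ cut out by Witt-vector polynomials, then stratifies $\mathcal X$ into smooth irreducible pieces. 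The openness statement (Theorem~\ref{thm smooth family open subset}) is proved not via an orbit map but by a direct deformation argument: one passes to a generic point via a strict $p$-ring built from $\OO[[S_1,\ldots,S_r]]$, kills the leading perturbation term using $\Ext^1=0$ iteratively, obtains a conjugating matrix over $\EE$, and then descends to $K(X_1,\ldots,X_n)$ using that the specialisation map $\widehat\varphi$ was arranged to be injective. No tangent-space comparison with $\Ext^1$ is ever made; the surjectivity criterion $\End_\Lambda(L)\twoheadrightarrow\End_\Lambda(L/pL)$ from the introduction is not used in the proof.

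\emph{The bound on $N$.} Your justification (``controlled by the rank of $L$ and the order of $L_0/pL_0$'') is incorrect: both quantities equal $\dim_K V$ in disguise and bound nothing. The paper's bound (proof of Theorem~\ref{thm lattice alg}) comes from the maximal-order argument: every full lattice in $V$ is isomorphic to one between $L_0$ and $L_0\cdot(\Gamma:\Lambda)^2$, where $\Gamma\supseteq\Lambda$ is a maximal order.

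\emph{The tangent computation.} You correctly flag this as the crux, but note that the tangent space of a Greenberg-functor variety such as your $X_{V,N}$ is not in any obvious way $\Ext^1_\Lambda(L,L)$ (which is a torsion $\OO$-module, not a $k$-vector space); carrying this through would require essentially the same Witt-vector machinery the paper develops in~\S2--\S3, at which point the group-action packaging no longer buys anything.
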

 An easy corollary of this is that only finitely many isomorphism classes of $\OO G$-lattices of any given character can be images of permutation lattices under Morita or stable equivalences originating from another group or block algebra.   
	Perhaps unsurprisingly, this result is proved using geometric methods. The basic idea of using a variety of modules (or a variety of complexes, as the case may be) has been successfully applied to modules and complexes over finite-dimensional algebras in many different contexts \cite{DadeAlgRigid, DadeGreenCorrespondent, DonaldDeformations, ZimmermannGeometryChainComplexes, RickardRigidity, RouquierAutomorphFrench}. The idea is typically that one gets a homomorphism from the tangent space of such a variety in a point $M$ into $\Ext^1(M,M)$, whose kernel is the tangent space of the subvariety of points $N$~isomorphic~to~$M$. 
	
	The way we obtain a variety parametrising all lattices with a given $K$-span is quite different from how one proceeds for finite-dimensional $k$-algebras. We essentially start with a fixed lattice, and conjugate an affording representation by a generic matrix. That is how we obtain our analogue of a variety of modules, a \emph{smooth family of $\Lambda$-lattices} (see Definition~\ref{defi smooth family}). 
	  Despite working over $\OO$, these smooth families of lattices are actually parametrised by varieties over $k$. 
	  This is possible due to the theory of Witt vectors. The two main ingredients of Theorem~A are the fact that each  smooth family of lattices contains at most one rigid lattice, up to isomorphism (see Theorem~\ref{thm smooth family open subset}), and the fact that lattices in a given finite-dimensional $A$-module can be appropriately partitioned into finitely many smooth families (see Theorem~\ref{thm lattice alg}). 
	
	Our second main result is an immediate consequence of Theorem~A, but we state it as a theorem nevertheless, since it was the main motivation for writing this paper. The study of Picard groups of blocks of finite group algebras over $\OO$ was initiated in \cite{BKL}. While \cite{BKL} primarily studies the group of Morita self-equivalences induced by bimodules of endo-permutation source, this is where the possibility of Picard groups of blocks always being finite was first raised. The theorem we obtain is much more general, and provides further evidence for the speculation put forward in \cite{EiselePicard} that the Lie algebra of $\Pic_{\OO}(\Lambda)$ (which is shown to be an algebraic group in that paper) should be related to the first Hochschild cohomology of $\Lambda$ in some way or form.
	
	\begin{namedtheorem}[Theorem B]\label{thm pic trivial}
		Assume that $\HHH^1(\Lambda)=0$. Then $\Pic_{\OO}(\Lambda)$ is a finite group. 
	\end{namedtheorem}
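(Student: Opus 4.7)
The plan is to apply Theorem~A to the enveloping algebra $\Lambda^e := \Lambda \otimes_\OO \Lambda^\opp$. To set this up I would first note that $A^e = A \otimes_K A^\opp$ is again a separable $K$-algebra---tensor products of separable finite-dimensional algebras remain semisimple under every field extension, hence are separable---so $\Lambda^e$ is an $\OO$-order in a separable $K$-algebra, and Theorem~A applies verbatim to $\Lambda^e$-lattices.

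Every class in $\Pic_\OO(\Lambda)$ is represented by an invertible $(\Lambda,\Lambda)$-bimodule $M$, equivalently a left $\Lambda^e$-module. Such an $M$ is finitely generated and $\OO$-projective (being projective as a one-sided $\Lambda$-module), hence a $\Lambda^e$-lattice. The key step is to show that $M$ is \emph{rigid} as a $\Lambda^e$-lattice under the hypothesis $\HHH^1(\Lambda) = 0$. Since $M$ is invertible as a bimodule, the functor $M^{-1} \otimes_\Lambda -$ is a self-equivalence of $\Lambda^e\modC$ which sends $M$ to $M^{-1} \otimes_\Lambda M \cong \Lambda$. Consequently
\[
\Ext^1_{\Lambda^e}(M,M) \;\cong\; \Ext^1_{\Lambda^e}(\Lambda,\Lambda) \;=\; \HHH^1(\Lambda) \;=\; 0 .
\]

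To conclude via Theorem~A it remains to bound $\rank_\OO M$ uniformly in $[M]$. For this I would pass to $K$ and note that $K \otimes_\OO M$ is an invertible $A$-bimodule. Inspecting the semisimple structure $A = \prod_i M_{n_i}(D_i)$, any invertible $A$-bimodule is determined by a permutation $\sigma$ of the simple factors preserving Morita equivalence classes (in particular $D_{\sigma(j)} \cong D_j$) together with standard invertible bimodules between the matched factors, and one obtains $\dim_K(K \otimes_\OO M) = \sum_j n_{\sigma(j)} n_j \dim_K D_j \leq \sum_j n_j^2 \dim_K D_j = \dim_K A$ by a Cauchy--Schwarz-type estimate. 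Combined with rigidity, Theorem~A then implies that only finitely many isomorphism classes of bimodules can appear as representatives of classes in $\Pic_\OO(\Lambda)$, so $\Pic_\OO(\Lambda)$ is finite.

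The main obstacle I expect is the identification $\Ext^1_{\Lambda^e}(M,M) \cong \HHH^1(\Lambda)$: one must be confident that invertibility of $M$ as a bimodule yields a genuine autoequivalence of $\Lambda^e\modC$ (not merely of one-sided module categories) under which $M$ corresponds to $\Lambda$. Once that categorical point is settled, the rank bound is a direct semisimple-algebra computation and the finiteness conclusion is a formal consequence of Theorem~A applied to $\Lambda^e$.
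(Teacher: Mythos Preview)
Your proposal is correct and follows the same overall strategy as the paper: pass to the enveloping order $\Lambda^e$, show that invertible bimodules are rigid because $\Ext^1_{\Lambda^e}(M,M)\cong\HHH^1(\Lambda)=0$, bound their $\OO$-rank, and invoke Theorem~A. The difference is purely in how the two auxiliary steps are handled. The paper first replaces $\Lambda$ by a basic Morita-equivalent order, so that every invertible bimodule is of the form $\Lambda_\alpha$ for some $\alpha\in\Aut_\OO(\Lambda)$; then both the rigidity isomorphism $\Ext^1_{\Lambda^e}(\Lambda_\alpha,\Lambda_\alpha)\cong\Ext^1_{\Lambda^e}(\Lambda,\Lambda)$ and the rank bound $\rank_\OO\Lambda_\alpha=\rank_\OO\Lambda$ are immediate. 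You instead work with an arbitrary invertible $M$, obtain rigidity from the autoequivalence $M^{-1}\otimes_\Lambda-$ of the bimodule category, and get the rank bound from a Wedderburn computation over $K$. Your route avoids the Morita reduction at the cost of a small extra argument over the semisimple algebra $A$; the paper's route is shorter once one is willing to assume $\Lambda$ basic.

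Your stated ``main obstacle'' is not actually an obstacle: if $M$ is invertible with inverse $M^{-1}$, then $N\mapsto M^{-1}\otimes_\Lambda N$ and $N\mapsto M\otimes_\Lambda N$ are mutually inverse exact functors on the category of $\Lambda$-$\Lambda$-bimodules (exactness holds because $M$ and $M^{-1}$ are projective on each side), and the first sends $M$ to $M^{-1}\otimes_\Lambda M\cong\Lambda$. So the identification $\Ext^1_{\Lambda^e}(M,M)\cong\HHH^1(\Lambda)$ is entirely routine. Your rank estimate is also fine: the permutation $\sigma$ of Wedderburn components induced by an invertible $A$-bimodule satisfies $D_{\sigma(j)}\cong D_j$, hence $\dim_K D_{\sigma(j)}=\dim_K D_j$, and then $\sum_j n_{\sigma(j)}n_j\dim_K D_j\leq\sum_j n_j^2\dim_K D_j$ follows from $2ab\leq a^2+b^2$ summed over each orbit.
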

	Here $\Pic_{\OO}(\Lambda)$ denotes the group of $\OO$-linear self-equivalences of the module category~of~$\Lambda$. This group is commensurable with the outer automorphism group $\Out_\OO(\Lambda)$, as well as $\Outcent(\Lambda)$ and $\operatorname{Picent}(\Lambda)$, and we could just as well have stated Theorem~B with $\Pic_{\OO}(\Lambda)$ replaced by any of these groups.
	The theorem of course immediately implies the following:
	\begin{corollary}\label{cor pic finite}
		Let $G$ be a finite group, and let $\OO Gb$ be a block. Then $\Pic_\OO(\OO Gb)$ is finite. 
	\end{corollary}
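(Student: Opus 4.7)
The plan is to deduce this directly from Theorem B applied to $\Lambda = \OO G b$. Note that $K G b$ is separable, as $K$ has characteristic zero, so the hypotheses of the general setup are met. Since $\OO G = \prod_{b'} \OO G b'$ is a direct product decomposition of $\OO$-algebras, the corresponding decomposition $(\OO G)^e \cong \bigoplus_{b',b''} \OO G b' \otimes_\OO (\OO G b'')^{\opp}$ of enveloping algebras has the bimodule $\OO G$ supported on the diagonal, yielding $\HHH^1(\OO G) \cong \bigoplus_{b'} \HHH^1(\OO G b')$. Hence it suffices to prove $\HHH^1(\OO G) = 0$, and then to apply Theorem B to the individual block summand.

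To compute $\HHH^1(\OO G)$, I would invoke the standard decomposition of the Hochschild cohomology of a group algebra as a sum over conjugacy classes:
\[
\HHH^n(\OO G) \;\cong\; \bigoplus_{[g]} H^n(C_G(g), \OO),
\]
where $[g]$ ranges over the conjugacy classes of $G$ and $\OO$ carries the trivial $C_G(g)$-module structure. This arises from decomposing $\OO G$ as an $\OO G$-bimodule (equivalently, an $\OO[G\times G]$-module) into a sum of transitive permutation bimodules, one for each conjugacy class, whose point stabilisers are the diagonal subgroups $\Delta C_G(g)$; Shapiro's lemma then produces the right-hand side. Specializing to $n = 1$ gives $\HHH^1(\OO G) \cong \bigoplus_{[g]} \Hom(C_G(g), (\OO,+))$. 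Since each $C_G(g)$ is a finite group and $(\OO,+)$ is a torsion-free abelian group (as $\OO = W(k)$ has characteristic zero), every such homomorphism is zero. Thus $\HHH^1(\OO G) = 0$, so $\HHH^1(\OO G b) = 0$, and Theorem B yields finiteness of $\Pic_\OO(\OO G b)$.

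There is no substantive obstacle here: both the conjugacy-class decomposition of $\HHH^*(\OO G)$ and the triviality of homomorphisms from a finite group to a torsion-free abelian group are classical, and all the genuine difficulty of the corollary is absorbed into Theorem A and Theorem B. If one wished to avoid citing the full conjugacy-class decomposition, one could instead argue directly: an element of $\HHH^1(\OO G)$ is represented by an $\OO$-linear derivation $d\colon \OO G \to \OO G$ modulo inner derivations, and the triviality of such derivations modulo inner ones can be verified by a direct averaging-type argument exploiting that $\OO G$ is $\OO$-free of finite rank and that group elements have finite order while $\OO$ is torsion-free.
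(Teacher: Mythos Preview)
Your proposal is correct and follows the same route as the paper: verify $\HHH^1(\OO Gb)=0$ and invoke Theorem~B. The paper leaves this corollary without a separate proof, relying on the introduction's observation that permutation lattices are rigid---applied to $\OO G\cong \OO[(G\times G)/\Delta G]$ as an $\OO[G\times G]$-module this gives $\HHH^1(\OO G)=\Ext^1_{(\OO G)^{\rm e}}(\OO G,\OO G)=0$, and hence $\HHH^1(\OO Gb)=0$ for each block summand---whereas you unpack the same vanishing via the centraliser decomposition $\HHH^1(\OO G)\cong\bigoplus_{[g]}H^1(C_G(g),\OO)$; both arguments are standard and essentially equivalent.
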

	There is also a more elementary formulation of this fact which is reminiscent of the second Zassenhaus conjecture.
	\begin{corollary}
		For a finite group $G$ there are only finitely many $\mathcal U(\OO G)$-conjugacy classes of group bases of $\OO G$.
	\end{corollary}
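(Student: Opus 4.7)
The plan is to reduce the corollary to the finiteness of the outer automorphism group $\Out_\OO(\OO G)$, which itself will follow from Corollary~\ref{cor pic finite}.

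Let $\mathcal B$ be the set of group bases of $\OO G$. The group $\Aut_\OO(\OO G)$ acts on $\mathcal B$ by $\alpha \cdot G' := \alpha(G')$, and this action restricted to the normal subgroup $\Inn(\OO G)$ is precisely the $\UU(\OO G)$-conjugation action. The key claim is that two elements of $\mathcal B$ lie in the same $\Aut_\OO(\OO G)$-orbit if and only if they are isomorphic as abstract groups: given an abstract group isomorphism $\phi : G' \to G''$ between two group bases, its $\OO$-linear extension with respect to the $\OO$-basis $G'$ is an $\OO$-algebra automorphism of $\OO G$ carrying $G'$ to $G''$, because the identity $\phi(h_1 h_2) = \phi(h_1)\phi(h_2)$ holds simultaneously as a product in $G''$ and as a product in $\UU(\OO G)$. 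Since there are only finitely many isomorphism types of groups of order $|G|$, the set $\mathcal B$ decomposes into finitely many $\Aut_\OO(\OO G)$-orbits. Any such orbit further decomposes into at most $|\Out_\OO(\OO G)|$ many $\UU(\OO G)$-conjugacy classes, by the double coset bound $|\Inn(\OO G) \backslash \Aut_\OO(\OO G) / \mathrm{Stab}(G')| \leq |\Out_\OO(\OO G)|$. Thus it suffices to verify that $\Out_\OO(\OO G)$ is finite.

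For this last point, every $\OO$-algebra automorphism of $\OO G$ permutes the finite set of block idempotents, so the subgroup $\Aut_\OO(\OO G)^\circ = \prod_b \Aut_\OO(\OO Gb)$ stabilising each block pointwise has finite index in $\Aut_\OO(\OO G)$. Since $\Inn(\OO G) \subseteq \Aut_\OO(\OO G)^\circ$ and itself decomposes as $\prod_b \Inn(\OO Gb)$, we find $\Aut_\OO(\OO G)^\circ / \Inn(\OO G) \cong \prod_b \Out_\OO(\OO Gb)$. Each factor $\Out_\OO(\OO Gb)$ embeds into $\Pic_\OO(\OO Gb)$ via the standard homomorphism $\alpha \mapsto [{}_1(\OO Gb)_\alpha]$, and the latter is finite by Corollary~\ref{cor pic finite}; hence $\Out_\OO(\OO G)$ is finite and the corollary follows. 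The only non-formal input beyond Corollary~\ref{cor pic finite} is the orbit identification in the middle step, and the main potential obstacle there, namely checking that $\OO$-linear extension of a group isomorphism between bases of $\OO G$ yields an $\OO$-algebra automorphism, reduces to a direct computation on basis elements.
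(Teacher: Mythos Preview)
Your proof is correct. The paper does not actually supply a proof of this corollary; it is stated immediately after Corollary~\ref{cor pic finite} as ``a more elementary formulation of this fact'', with the deduction left to the reader. Your argument fills in exactly the expected details: translate the problem into counting $\Inn(\OO G)$-orbits on group bases, bound these by $|\Out_\OO(\OO G)|$ times the number of abstract isomorphism types of groups of order $|G|$, and then establish finiteness of $\Out_\OO(\OO G)$.

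One minor simplification is available. Rather than passing through the block decomposition and invoking Corollary~\ref{cor pic finite} for each $\OO Gb$, you can apply Theorem~B directly to $\Lambda=\OO G$. As the introduction already points out, $\OO G$ viewed as a right $\OO(G^{\rm op}\times G)$-module is a transitive permutation lattice and is therefore rigid; that is, $\HHH^1(\OO G)=\Ext^1_{(\OO G)^{\rm e}}(\OO G,\OO G)=0$. Theorem~B then gives finiteness of $\Pic_\OO(\OO G)$, and hence of $\Out_\OO(\OO G)$, in a single step. Your route via blocks is of course equally valid, and arguably closer to how the paper itself frames the result.
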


	Corollary~\ref{cor pic finite} shows that block algebras over $\OO$ are in some sense extremely rigid, and it turns their Picard groups into a very interesting finite invariant. This is particularly important since K\"ulshammer \cite{KuelshammerDonovan} showed that Picard groups play an important role in both Donovan's conjecture and the classification of blocks of a given defect group. This theme is explored in \cite{EatonDefect16, EEL}. In the way of actual computations, \cite{NebeHertweck} determines the Picard group of the principal block of $A_6$ for $p=3$, \cite{EatonLiveseyPicard} determines Picard groups of almost all blocks of abelian $2$-defect of rank three, and this will certainly not mark the end of the story. Unfortunately, we have very little to offer to aid the calculation of Picard groups. Still, the following might be useful. Note that $\mathcal{T}(\OO Gb)$ denotes the subgroup of $\Pic_{\OO}(\OO Gb)$ of equivalences induced by $p$-permutation bimodules, which is determined in \cite{BKL}. 

	\begin{prop}\label{prop det pic}
		Let $G$ be a finite group, let $\OO G b$ be a block and let $P\leq G$ be one of its defect groups. 
		\begin{enumerate}
		\item An element of $\Pic_{\OO}(\OO G b)$ lies in $\mathcal T(\OO Gb)$ if and only if it sends the $p$-permutation lattice $\OO[P\backslash G]\cdot b$ to a $p$-permutation lattice.
		\item If $\OO[P\backslash G]\cdot b$ is, up to isomorphism, the only rigid lattice in $K[P \backslash G]\cdot b$ with endomorphism ring isomorphic to $\End_{\OO G}(\OO[P\backslash G]\cdot b)$, then 
		$\operatorname{Picent}(\OO Gb)\subseteq \mathcal T(\OO Gb)$.
		\end{enumerate}
	\end{prop}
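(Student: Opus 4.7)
The plan is to prove part~(1) first and then deduce part~(2) from it using the hypothesis together with the standard invariance properties of Morita equivalences.

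For the forward direction of~(1), suppose $F$ is induced by a $p$-permutation $(\OO Gb,\OO Gb)$-bimodule $M$ with $(G\times G)$-stable $\OO$-basis $\{m_i\}$. Using the isomorphism $\OO[P\backslash G]\cdot b\cong \OO Gb\otimes_{\OO P}\OO$, one rewrites $F(\OO[P\backslash G]\cdot b)\cong M\otimes_{\OO P}\OO$, with $M$ viewed as a right $\OO P$-module via $\OO P\hookrightarrow \OO Gb$. A set of representatives of the right $P$-orbits on $\{m_i\}$ then descends to an $\OO$-basis of $M\otimes_{\OO P}\OO$ that is manifestly permuted by the left $G$-action, so $F(\OO[P\backslash G]\cdot b)$ is a $p$-permutation $\OO G$-module.

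For the reverse direction of~(1), let $M$ be the bimodule inducing $F$. It is classical (Puig) that the indecomposable summands of $M$ as an $\OO[G\times G]$-module have vertex contained, up to $(G\times G)$-conjugacy, in a diagonal copy $\Delta P$ of the defect group; what remains is to show that their $\OO\Delta P$-sources are trivial. The plan is to detect this via the Brauer construction at $\Delta P$. Specifically, because $\OO[P\backslash G]\cdot b$ has a transparent Brauer quotient at $P$ concentrated on the coset~$P$ itself, one obtains a natural identification between $(M\otimes_{\OO P}\OO)(P)$ and $M(\Delta P)$; passing to a source algebra $A=i\OO Gbi$ via a source idempotent $i\in(\OO Gb)^P$ provides a clean framework since $\OO Gbi$ is itself a $p$-permutation $(\OO Gb,A)$-bimodule inducing a Morita equivalence. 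Granted the identification, the assumed $p$-permutation structure of $M\otimes_{\OO P}\OO\cong F(\OO[P\backslash G]\cdot b)$ forces $M(\Delta P)$ to be a permutation $k[N_{G\times G}(\Delta P)/\Delta P]$-module, and the Green correspondents of the summands of $M$ (and hence the summands themselves) have trivial source, i.e., $M$ is a $p$-permutation bimodule. The main obstacle is this Brauer-quotient identification and the subsequent trivial-source deduction; once in place the rest is routine.

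For part~(2), take $F\in\operatorname{Picent}(\OO Gb)$. Since $K\otimes\OO Gb$ is a separable $K$-algebra, $\operatorname{Picent}(K\otimes\OO Gb)$ is trivial (any centre-fixing $K$-algebra automorphism of a separable $K$-algebra is inner by Skolem--Noether applied to each simple factor), so $K\otimes F$ is naturally isomorphic to the identity functor and $F(\OO[P\backslash G]\cdot b)$ is an $\OO Gb$-lattice in $K[P\backslash G]\cdot b$. Being the image of a rigid lattice under a Morita equivalence, it is rigid, and its endomorphism ring is isomorphic to $\End_{\OO G}(\OO[P\backslash G]\cdot b)$. The uniqueness hypothesis in~(2) therefore yields $F(\OO[P\backslash G]\cdot b)\cong\OO[P\backslash G]\cdot b$, which is $p$-permutation, and part~(1) gives $F\in\mathcal T(\OO Gb)$.
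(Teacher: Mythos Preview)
Your treatment of part~(2) and of the forward direction of part~(1) is fine and essentially matches the paper. The difference lies in the reverse implication of~(1).

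The paper's proof is a one-line application of Weiss' criterion: viewing $M$ as an $\OO(G^{\rm op}\times G)$-lattice, Weiss' theorem says $M$ is $p$-permutation if and only if the lattice of left $P$-fixed points ${}^PM$ is $p$-permutation as a right $\OO Gb$-module, and ${}^PM\cong \OO[P\backslash G]\cdot b\otimes_{\OO Gb}M$. (Freeness of $M$ as a left $\OO P$-module, needed for Weiss and for identifying invariants with coinvariants, follows from $M$ being projective on each side.)

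Your proposed route via the Brauer construction does not avoid this; it runs straight into it. The two steps you flag as ``the main obstacle'' are genuinely problematic. First, the identification $(M\otimes_{\OO P}\OO)(P)\cong M(\Delta P)$ is a statement of the type ``Brauer construction commutes with coinvariants/tensor products'', which is standard for $p$-permutation modules but not for arbitrary lattices; using it here is circular. Second, and more seriously, knowing that $M(\Delta P)$ is a permutation $k[N_{G\times G}(\Delta P)/\Delta P]$-module does \emph{not} force the summands of $M$ to have trivial source. The Brauer quotient is a $k$-linear invariant and cannot by itself detect the integral source of an $\OO$-lattice; for a non-$p$-permutation lattice it can perfectly well vanish (e.g.\ the augmentation ideal of $\OO C_p$ has zero $C_p$-fixed points). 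Passing from ``fixed-point lattice is $p$-permutation'' to ``the lattice itself is $p$-permutation'' is exactly the content of Weiss' theorem, and there is no shortcut through Green correspondence or the Brauer construction. You should simply invoke Weiss, as the paper does.
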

	The same is true if we set $b=1$ and let $P$ be a Sylow $p$-subgroup of $G$. It is also sufficient to prove that each indecomposable summand $L$ of $\OO[P\backslash G]\cdot b$ is the unique rigid lattice in $K\otimes_\OO L$ with endomorphism ring $\End_{\OO Gb}(L)$. It is however unclear whether one can show such uniqueness in any interesting examples, even though the construction explained in \S\ref{section lattice var} is in principle constructive. Nevertheless,
	Proposition~\ref{prop det pic} highlights the importance of understanding and classifying rigid lattices with a given character. 
	
	One last thing to note is that while we focus on applications to block algebras in this article, there are other types of $\OO$-orders with vanishing first Hochschild cohomology to which Theorem~B applies. Iwahori-Hecke algebras defined over $\OO$, for instance, should have this property by \cite[Theorem 5.2]{GeckRouquier}.

	\begin{namedtheorem}[Notation and conventions] By $\nu_p:\ K \longrightarrow \Z\cup\{\infty\}$ we denote the $p$-adic valuation 
		on $K$. Modules are right modules by default. All varieties are reduced, and by a ``point'' we mean a closed point. $\Lambda$ will always denote an $\OO$-order in a separable $K$-algebra. We will assume that the reader is familiar with the theory of Witt vectors as laid out in \cite[\S5--\S6]{SerreLocalFields}.
	\end{namedtheorem}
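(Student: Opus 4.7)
The final environment in the excerpt is labelled as a theorem but in fact asserts nothing mathematical: it fixes notation ($\nu_p$ for the $p$-adic valuation on $K$), records two typographical conventions (modules are right by default and ``point'' means closed point), imposes a reducedness convention on all varieties, restates the standing hypothesis on $\Lambda$, and refers the reader to \cite{SerreLocalFields} for background on Witt vectors. Since no proposition is being asserted, there is strictly speaking no proof to propose, and no sketch of an argument is possible.

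For the sake of completeness I briefly record why each item is well-posed. The map $\nu_p\colon K\longrightarrow \Z\cup\{\infty\}$ exists uniquely because $\OO=W(k)$ is a complete discrete valuation ring with uniformiser $p$; one normalises so that $\nu_p(p)=1$, extends by additivity to $K^\times$, and sets $\nu_p(0)=\infty$. Identifying ``point'' with ``closed point'' is harmless since $k$ is algebraically closed, so closed points are exactly the $k$-points of any variety in play. The right-module convention is a purely typographical choice and fixes the sign conventions in later $\Ext$-computations. The reducedness convention is compatible with the constructions in the body of the paper because the smooth families of lattices of Definition~\ref{defi smooth family} and the parameter spaces alluded to in Theorem~\ref{thm lattice alg} will be introduced in a manner that depends only on closed points and thus cannot see nilpotents. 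Finally, the hypothesis that $\Lambda$ is an $\OO$-order in a separable $K$-algebra merely repeats what has been imposed since the abstract.

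The only ``obstacle'' to giving a proof in the conventional sense is therefore purely linguistic: the environment has been repurposed to typeset a paragraph of notation and standing conventions, not a theorem. Any mathematically substantive proof proposal would necessarily address a different statement (for instance the immediately preceding Proposition~\ref{prop det pic}, or one of the two named Theorems~A and~B), and would therefore not be a response to the final statement as worded.
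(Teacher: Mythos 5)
You are right that this environment is a block of notation and standing conventions rather than a mathematical assertion, and accordingly the paper offers no proof of it; your reading matches the paper exactly. Your supplementary remarks on why each convention is well-posed (e.g.\ $\nu_p$ being the normalised valuation with $\nu_p(p)=1$ on the fraction field of $\OO=W(k)$, and closed points being $k$-points since $k$ is algebraically closed) are accurate but not required.
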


	\section{Prerequisites}
	
	Recall that a commutative ring $R$ is called a \emph{strict $p$-ring} if $R$ is complete and Hausdorff with respect to the topology induced by the filtration $R\supseteq pR \supseteq p^2 R \supseteq \ldots$, the element $p$ is not a zero-divisor in $R$ and $\bar R=R/pR$ is a perfect ring of characteristic $p$. The condition that $R$ be complete and Hausdorff is equivalent to the natural homomorphism $R \longrightarrow \varprojlim R/p^i R$ being an isomorphism. The theory of Witt vectors shows (see \cite[\S5 Proposition~10 and \S6 Theorem~8]{SerreLocalFields}) that a strict $p$-ring $R$ with residue ring $\bar R$ is unique up to unique isomorphism (or, to be more precise, the pair $(R, R/pR\rightarrow\bar R)$ is). In particular, there is a unique ring homomorphism $\varphi:\ R \longrightarrow W(\bar R)$, where $W(\bar R)$ denotes the ring of Witt vectors over $\bar R$, making the diagram
	\begin{equation}
		\xymatrix{
			R \ar@{->>}[dr]\ar[r]^\varphi&  W(\bar R)\ar@{->>}[d]\\
			&\bar R
		}
	\end{equation}
	commute, the arrows going down being the natural homomorphism from $R$ into $\bar R$ and the projection onto the first Witt vector component, respectively.  
	
	\begin{notation}
	Let $R$ be a strict $p$-ring with perfect residue ring $\bar R$.
	For $l\in \N$ we let 
	\begin{equation}
	\rho_l:\ R \longrightarrow \bar R^l
	\end{equation}
	denote the map which sends $r\in R$ to the first $l$ components of the Witt vector $\varphi(r)\in W(\bar R)$.
	We say that $r$ \emph{reduces to} $\rho_l(r)$.
	\end{notation}
	We will extend the map $\rho_l$ to vectors and matrices over $R$. There is also no need to explicitly record the ring $R$ in our notation, due to the uniqueness of the isomorphism between a strict $p$-ring and the corresponding ring of Witt vectors explained above. 
	
	A strict $p$-ring with a perfect residue \emph{field} is a complete discrete valuation ring, and if this residue field is moreover algebraically closed we may view any set of truncated Witt vectors over it as affine space. This applies to our ring $\OO$. It is clear that if $f\in \OO[X_1,\ldots,X_n]$ is a polynomial and $r\in \N$, then the condition ``$\nu_p(f(\widehat x_1,\ldots,\widehat x_n))\geq r$'' for $\widehat x_1,\ldots,\widehat x_n\in \OO^n$ is equivalent to certain polynomials in the entries of $\rho_r(\widehat x_1),\ldots,\rho_r(\widehat x_n)$  vanishing. That is, ``$\nu_p(f(\widehat x_1,\ldots,\widehat x_n))\geq r$'' is essentially a ``closed condition'' on Witt vectors. The point of the remainder of this section is to understand the implications of this simple observation.
	
	\begin{defi}\label{defi generic val}
		Let $f\in \OO[X_1, \ldots, X_n]$ be a polynomial, and let $l\in\N$. 
		\begin{enumerate}
		\item We define $\nu_p(f)$ as the minimal $p$-valuation of a coefficient of $f$.
		\item For a point $\mathbf x = (x_{1,0},\ldots,x_{1,l-1},\ldots,x_{n,0},\ldots,x_{n,l-1})\in \mathbb A^{n\cdot l}(k)$ we define 
		 \begin{equation}
		 	\nu_{p, \mathbf x}(f)=\min \{ \nu_p(f(\widehat x_1,\ldots, \widehat x_n)) \ |\ \widehat{\mathbf x}=(\widehat x_1,\ldots, \widehat x_n) \in \OO^n \textrm{ such that } \rho_l(\mathbf{\widehat x})=\mathbf x \}
		 \end{equation}
		 	We call $\nu_{p, \mathbf x}(f)$ the \emph{generic valuation} of $f$ at $\mathbf{x}$.
		 \end{enumerate}
	\end{defi}
 We may extend this generic valuation to $K(X_1\ldots,X_n)$ in the obvious way. One should note though that if $f$ is a rational function rather than a polynomial, then $\nu_p(f(\widehat{\mathbf x}))$ could be either bigger or smaller than $\nu_{p, \mathbf x}(f)$, depending on the choice of $\widehat{\mathbf x}\in \OO^n$ reducing to $\mathbf x$. 

	\begin{prop}\label{prop generic val first properties} Assume the situation of Definition~\ref{defi generic val}.
	\begin{enumerate}
		\item For any $\widehat{\mathbf x}=(\widehat x_1, \ldots, \widehat x_n)\in \OO^n$ reducing to $\mathbf x$
		we have
		\begin{equation}
		\nu_{p,\mathbf x}(f) = \nu_p(f(\widehat x_1 + p^l\cdot Z_1, \ldots, \widehat x_n+p^l\cdot Z_n))
		\end{equation}
		where $Z_1,\ldots, Z_n$ are indeterminates. 
	\item If an $\widehat{\mathbf x} \in \OO^n$  reduces to ${\mathbf x}$, then there is a $\widehat{\mathbf z}\in W(\bar{\mathbb F}_p)^n$ such that 
	\begin{equation}
	\nu_p(f(\widehat {\mathbf x} + p^l \cdot \widehat {\mathbf z}))=\nu_{p, \mathbf x}(f).
	\end{equation}
	\end{enumerate}
	\end{prop}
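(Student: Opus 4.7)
The plan is to prove both parts in one sweep by reinterpreting $\nu_{p,\mathbf x}(f)$ in terms of the polynomial $g(\mathbf Z):=f(\widehat{\mathbf x}+p^l\mathbf Z)\in\OO[\mathbf Z]$. First I would invoke the theory of Witt vectors (identifying $\OO=W(k)$) to observe that two elements of $\OO$ have the same first $l$ Witt components exactly when they are congruent modulo $p^l$, so that the set of $\widehat{\mathbf y}\in\OO^n$ reducing to $\mathbf x$ is precisely the coset $\widehat{\mathbf x}+p^l\OO^n$. Consequently
\begin{equation*}
\nu_{p,\mathbf x}(f)=\min_{\widehat{\mathbf z}\in\OO^n}\nu_p(g(\widehat{\mathbf z})),
\end{equation*}
and the content of (1) becomes the assertion that this minimum equals $\nu_p(g)$ in the sense of Definition~\ref{defi generic val}(1).

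Next, the trivial bound $\nu_p(g(\widehat{\mathbf z}))\geq\nu_p(g)$ (valid for every $\widehat{\mathbf z}\in\OO^n$) gives one inequality $\nu_{p,\mathbf x}(f)\geq\nu_p(g)$. To close both parts simultaneously I would exhibit a $\widehat{\mathbf z}\in W(\bar{\mathbb F}_p)^n$ achieving equality. Set $r=\nu_p(g)$ and consider $g_1:=p^{-r}g\in\OO[\mathbf Z]$, whose reduction $\bar g_1\in k[\mathbf Z]$ is nonzero by construction. Picking some $\bar{\mathbf z}\in\bar{\mathbb F}_p^n$ with $\bar g_1(\bar{\mathbf z})\neq 0$ (existence discussed below) and letting $\widehat{\mathbf z}\in W(\bar{\mathbb F}_p)^n$ be its Teichm\"uller lift, $g_1(\widehat{\mathbf z})\in W(\bar{\mathbb F}_p)\subseteq\OO$ reduces mod $p$ to $\bar g_1(\bar{\mathbf z})\neq 0$, whence $\nu_p(g_1(\widehat{\mathbf z}))=0$ and therefore $\nu_p(f(\widehat{\mathbf x}+p^l\widehat{\mathbf z}))=\nu_p(g(\widehat{\mathbf z}))=r$. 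This forces the remaining inequality in (1) and at the same time produces the witness demanded by (2).

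The only supporting fact I need is that a nonzero polynomial in $k[\mathbf Z]$ cannot vanish identically on $\bar{\mathbb F}_p^n$, and this is where I expect the mildest amount of work. It follows by a standard induction on the number of variables: for $n=1$ a nonzero univariate polynomial over $k$ has only finitely many roots while $\bar{\mathbb F}_p$ is infinite, and for the step one writes $\bar g_1=\sum_i h_i(Z_1,\ldots,Z_{n-1})Z_n^i$ with some $h_i\neq 0$, applies the induction hypothesis to $h_i$ to find $(\bar z_1,\ldots,\bar z_{n-1})\in\bar{\mathbb F}_p^{n-1}$ at which $h_i$ is nonzero, and then chooses $\bar z_n\in\bar{\mathbb F}_p$ outside the finite root set of the resulting nonzero univariate polynomial. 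Working over $\bar{\mathbb F}_p$ rather than all of $k$ is precisely what makes the final $\widehat{\mathbf z}$ land in the smaller ring $W(\bar{\mathbb F}_p)^n$ required by the statement of part (2); no serious obstacle is anticipated beyond this density input.
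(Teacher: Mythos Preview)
Your argument is correct and follows essentially the same route as the paper: both identify the lifts of $\mathbf x$ with the coset $\widehat{\mathbf x}+p^l\OO^n$, divide out the minimal power of $p$ to get a polynomial with nonzero reduction $\bar g_1\in k[\mathbf Z]$, and then pick a non-vanishing point (the paper first does this over $k$ for part~(1) and then invokes Zariski-density of $\bar{\mathbb F}_p^n$ for part~(2), whereas you do both at once over $\bar{\mathbb F}_p$). One harmless slip: the value $g_1(\widehat{\mathbf z})$ need not lie in $W(\bar{\mathbb F}_p)$, since the coefficients of $g_1$ live in $\OO=W(k)$ rather than $W(\bar{\mathbb F}_p)$; but $g_1(\widehat{\mathbf z})\in\OO$ still reduces modulo $p$ to $\bar g_1(\bar{\mathbf z})\neq 0$, which is all that is needed.
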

	\begin{proof}
		\begin{enumerate}
		\item Note that the right hand side cannot be bigger than the left hand side. On the other hand, 
		if $\nu_p(f(\widehat x_1 + p^l Z_1, \ldots, \widehat x_n+p^lZ_n))=m\in \Z_{\geq 0}$, then 
		$p^{-m}f(\widehat x_1 + p^l Z_1, \ldots, \widehat x_n+p^lZ_n)$ reduces to a non-zero polynomial in 
		$k[Z_1,\ldots, Z_n]$, and we can certainly find values for the $Z_i$ for which the polynomial does not vanish. This shows that the right hand side cannot be smaller than the left hand side either.
	\item By the first part we know that $p^{-\nu_{p, \mathbf x}(f)}f(\widehat x_1 + p^l Z_1, \ldots, \widehat x_n+p^lZ_n)$ reduces to a non-zero polynomial $g\in k[Z_1,\ldots, Z_n]$, and 
$\nu_p(f(\widehat {\mathbf x} + p^l \cdot \widehat {\mathbf z}))=\nu_{p, \mathbf x}(f)$ if and only if $g(\mathbf z)\neq 0$, where $\mathbf z$ is the reduction of $\widehat{\mathbf z}$ modulo $p$. Hence, what we are looking for is a $\mathbf z \in \bar {\mathbb F}_p^n$ which avoids the vanishing set of a polynomial defined over $k$. Since $\mathbb A^n(\bar {\mathbb F}_p)$ is Zariski-dense in $\mathbb A^n(k)$ such a $\mathbf z$ exists trivially. \qedhere
		\end{enumerate}
	\end{proof}

	\begin{corollary}\label{cor gen val ext}
	Let $\EE$ be the ring of Witt vectors over an algebraically closed field $k'$ containing $k$,
	and let $f \in \OO[X_1,\ldots,X_n]$ be a polynomial. For $\mathbf x \in \mathbb A^{n\cdot l}(k)$ the value of 
	$\nu_{p, \mathbf x}(f)$ is independent of whether we consider $f$ as a polynomial over $\OO$ or over $\EE$.
	\end{corollary}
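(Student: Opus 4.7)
The strategy is to reduce the corollary to the explicit formula in Proposition~\ref{prop generic val first properties}(1), which rewrites the generic valuation as the minimum $p$-adic valuation of the coefficients of a single polynomial obtained by shifting by $p^l\cdot Z_i$. Since such a formula avoids taking an infimum over lifts, it behaves well under base change.

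Concretely, I would first pick a lift $\widehat{\mathbf x}\in \OO^n$ of $\mathbf x$, which exists because $\OO=W(k)\twoheadrightarrow k^l$ via $\rho_l$ is surjective. By the functoriality of the Witt vector construction, the canonical embedding $\OO=W(k)\hookrightarrow W(k')=\EE$ respects Witt vector components, so $\rho_l^\EE$ restricted to $\OO$ coincides with $\rho_l^\OO$ followed by the inclusion $k\hookrightarrow k'$. In particular the same $\widehat{\mathbf x}$ serves as a lift of $\mathbf x$ both when $\mathbf x$ is viewed in $\mathbb A^{n\cdot l}(k)$ and when it is viewed in $\mathbb A^{n\cdot l}(k')$.

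Now apply Proposition~\ref{prop generic val first properties}(1) twice to this common lift, once over $\OO$ and once over $\EE$. Both applications give
\begin{equation*}
\nu_{p,\mathbf x}(f)=\nu_p\bigl(f(\widehat x_1+p^l Z_1,\ldots,\widehat x_n+p^l Z_n)\bigr),
\end{equation*}
where on the right hand side $\nu_p$ denotes the minimum $p$-valuation of a coefficient in $\OO[Z_1,\ldots,Z_n]$ in the first case and in $\EE[Z_1,\ldots,Z_n]$ in the second. But the polynomial on the right has all of its coefficients in $\OO$ (they are obtained by substituting elements of $\OO$ into a polynomial with coefficients in $\OO$), so the only thing to verify is that the $p$-adic valuation on $\EE$ restricts to the $p$-adic valuation on $\OO$. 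This is immediate from the fact that $p$ is a uniformizer in both $\OO$ and $\EE$, i.e.\ that the extension $\EE/\OO$ is unramified (which is standard for Witt vectors of perfect fields of characteristic $p$). There is no real obstacle: the content is entirely the compatibility of the Witt vector construction with the inclusion $k\hookrightarrow k'$, and the parametric reformulation provided by Proposition~\ref{prop generic val first properties}(1) does the rest.
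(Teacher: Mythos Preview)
Your proof is correct and is exactly the argument the paper has in mind when it says ``This follows from the first part of Proposition~\ref{prop generic val first properties}''; you have simply spelled out the details. The key points---that a lift $\widehat{\mathbf x}\in\OO^n$ also serves as a lift in $\EE^n$ by functoriality of Witt vectors, and that the coefficients of $f(\widehat x_1+p^lZ_1,\ldots,\widehat x_n+p^lZ_n)$ lie in $\OO$ so their $p$-valuations are unchanged under the unramified extension $\OO\hookrightarrow\EE$---are precisely what makes the one-line proof work.
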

	\begin{proof}
		This follows from the first part of Proposition~\ref{prop generic val first properties}.
	\end{proof}

	\begin{corollary}
	In the situation of Proposition~\ref{prop generic val first properties} let $f_1,\ldots,f_d\in \OO[X_1,\ldots, X_n]$ ($d\in \N$) be non-zero polynomials. Then, for any $\widehat{\mathbf x}\in \OO^n$ reducing to $\mathbf x$, we have \begin{equation}
		\nu_{p,\mathbf x}(f_1\cdots f_d)=\nu_p(f_1(\widehat{\mathbf x})\cdots f_d(\widehat{\mathbf x}))
	\end{equation}
	if and only if 
	\begin{equation}
	 \nu_{p,\mathbf x}(f_i)=\nu_p(f_i(\widehat{\mathbf x})) \textrm{ for all $1\leq i\leq d$}.
	\end{equation}
	\end{corollary}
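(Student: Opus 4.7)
The plan is to reduce the statement to a purely additive identity by exploiting the fact that both $\nu_{p,\mathbf x}$ on polynomials and $\nu_p$ on $K$ are additive under products, while we always have the inequality $\nu_{p,\mathbf x}(f_i)\leq \nu_p(f_i(\widehat{\mathbf x}))$ from the very definition of the generic valuation (it is a minimum, and $\widehat{\mathbf x}$ is an admissible lift).

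First I would establish that the generic valuation is multiplicative on products, i.e.
\begin{equation*}
\nu_{p, \mathbf x}(f_1\cdots f_d)=\sum_{i=1}^d \nu_{p,\mathbf x}(f_i).
\end{equation*}
By Proposition~\ref{prop generic val first properties}(1), the left-hand side equals the Gauss valuation (minimum $p$-valuation of the coefficients) of the polynomial $\prod_i f_i(\widehat x_1+p^l Z_1,\ldots,\widehat x_n+p^l Z_n)$ in the indeterminates $Z_1,\ldots,Z_n$ with coefficients in $\OO$. Since $\OO$ is a discrete valuation ring and hence $k[Z_1,\ldots,Z_n]$ is an integral domain, Gauss's lemma applies: the Gauss valuation on $\OO[Z_1,\ldots,Z_n]$ is additive under products. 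Applying Proposition~\ref{prop generic val first properties}(1) once more to each factor, the claim follows.

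Second, additivity of $\nu_p$ on $K^\times$ gives the parallel identity
\begin{equation*}
\nu_p\bigl(f_1(\widehat{\mathbf x})\cdots f_d(\widehat{\mathbf x})\bigr)=\sum_{i=1}^d \nu_p(f_i(\widehat{\mathbf x})),
\end{equation*}
with the convention that both sides are $\infty$ if some $f_i(\widehat{\mathbf x})=0$ (in which case both $\nu_{p,\mathbf x}(f_i)$ and $\nu_p(f_i(\widehat{\mathbf x}))$ are $\infty$ on the $f_i$-side, and the corresponding equality is automatic; the remaining factors can be treated separately).

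Finally, combining the two displayed identities and the termwise inequality $\nu_{p, \mathbf x}(f_i)\leq \nu_p(f_i(\widehat{\mathbf x}))$ shows that the sums agree if and only if each summand agrees, which is exactly the claim. There is no real obstacle here beyond invoking Gauss's lemma correctly; the one point to be a little careful about is the case where one or more of the $f_i(\widehat{\mathbf x})$ vanish, which is easily handled by separating those factors out (they contribute $\infty$ on both sides and force both sides of the equivalence to collapse to the same sub-statement about the remaining factors).
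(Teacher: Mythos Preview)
Your argument is essentially the same as the paper's: use Proposition~\ref{prop generic val first properties}(1) to get additivity of $\nu_{p,\mathbf x}$, combine with additivity of $\nu_p$, and then use the termwise inequality $\nu_{p,\mathbf x}(f_i)\leq \nu_p(f_i(\widehat{\mathbf x}))$ to conclude. The paper records exactly these three ingredients in one sentence each.

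One slip to correct in your parenthetical about the vanishing case: if $f_i\neq 0$ but $f_i(\widehat{\mathbf x})=0$, then $\nu_p(f_i(\widehat{\mathbf x}))=\infty$, but $\nu_{p,\mathbf x}(f_i)$ is \emph{finite}, since by Proposition~\ref{prop generic val first properties}(1) it equals $\nu_p$ of the nonzero polynomial $f_i(\widehat x_1+p^lZ_1,\ldots,\widehat x_n+p^lZ_n)$. So the ``corresponding equality'' is not automatic; rather, it fails, as does the product equality (finite left-hand side, infinite right-hand side), and the biconditional holds because both conditions are false. Your main argument already handles this correctly if you simply allow $\infty$ in the sums and inequalities, so the separate treatment is unnecessary and should be dropped or rewritten.
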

	\begin{proof}
		By the first part of Proposition~\ref{prop generic val first properties} we have 
		$\nu_{p,\mathbf x}(f_1\cdots f_d) = \sum_i \nu_{p, \mathbf x}(f_i)$. Moreover, the valuation of a polynomial at an $\widehat{\mathbf x}$ is always greater than or equal to the generic valuation of the polynomial at $\mathbf x$, but never smaller. The claim follows.
	\end{proof}

	The above shows that if we have finitely many non-zero polynomials $f_1,\ldots,f_d$ ($d\in \N$), then there is always an $\widehat{\mathbf x}$ reducing to $\mathbf x$ such that $\nu_{p}(f_i(\widehat{\mathbf x}))=\nu_{p, \mathbf x}(f_i)$ for all $i$ at once. One last thing we need to understand is what the set of all points $\mathbf x \in \mathbb A^{n\cdot l}(k)$ with $\nu_{p,\mathbf x}(f)\geq r$ (for some given polynomial $f$ and $r\in \Z_{\geq 0}$) looks like geometrically. 
	 Proposition~\ref{prop gen val zariski closed} below answers this, and is essentially a consequence of the fact that such a set is the complement of the image of an open set under the projection onto the first $l$ components of a Witt vector, which will be a closed set. 
	\begin{prop}\label{prop gen val zariski closed}
		Given $f \in \OO[X_1,\ldots,X_n]$, $l\in \N$ and $r\in \Z_{\geq 0}$, there is a closed subvariety ${\mathcal V_{l,\nu_{p,-}(f)\geq r}\subseteq \mathbb A^{n\cdot l}}$ defined over $k$ such that for any algebraically closed $k'\supseteq k$ we have
		\begin{equation}
		\mathcal V_{l,\nu_{p,-}(f)\geq r}(k')=\{ \mathbf x \in \mathbb A^{n\cdot l}(k') \ | \ \nu_{p,\mathbf x}(f) \geq r \}
		\end{equation}
	\end{prop}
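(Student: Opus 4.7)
The plan is to invoke Proposition~\ref{prop generic val first properties}(1) to rewrite $\nu_{p,\mathbf x}(f)\geq r$ as a finite list of $p$-divisibility conditions on certain elements of $\OO$, and then to translate those into polynomial equations in the coordinates of $\mathbf x$ via Witt vector arithmetic.

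First I would expand $f(\mathbf X+\mathbf Y)=\sum_\alpha h_\alpha(\mathbf X)\,\mathbf Y^\alpha$ as a finite sum with $h_\alpha\in\OO[X_1,\ldots,X_n]$. Substituting $\mathbf Y=p^l\mathbf Z$ and reading off coefficients, Proposition~\ref{prop generic val first properties}(1) identifies
\begin{equation*}
\nu_{p,\mathbf x}(f) \;=\; \min_\alpha\bigl(\nu_p(h_\alpha(\widehat{\mathbf x}))+l|\alpha|\bigr)
\end{equation*}
for any lift $\widehat{\mathbf x}\in\OO^n$ of $\mathbf x$. Hence $\nu_{p,\mathbf x}(f)\geq r$ is equivalent to the finite conjunction of the conditions $\nu_p(h_\alpha(\widehat{\mathbf x}))\geq r-l|\alpha|$ taken over the $\alpha$ with $l|\alpha|<r$.

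Next I would fix a canonical lift by sending a point $\mathbf x\in\mathbb A^{n\cdot l}(k')$ (for an algebraically closed extension $k'\supseteq k$) to the $n$-tuple of Witt vectors $\widehat x_i\in W(k')$ whose first $l$ components are $x_{i,0},\ldots,x_{i,l-1}$ and whose subsequent components are $0$. Since Witt addition and multiplication are given by universal polynomials over $\Z$, and since the coefficients of $h_\alpha$ are fixed elements of $\OO=W(k)$, each Witt component of $h_\alpha(\widehat{\mathbf x})$ becomes a polynomial in the coordinates $x_{i,j}$ with coefficients in $k$. The condition $\nu_p(h_\alpha(\widehat{\mathbf x}))\geq s$ amounts to the vanishing of the first $s$ such polynomials, cutting out a closed subvariety of $\mathbb A^{n\cdot l}$ defined over $k$; intersecting these conditions over the finitely many relevant $\alpha$ defines the desired $\mathcal V_{l,\nu_{p,-}(f)\geq r}$.

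The delicate point I would want to verify is that this really captures $\nu_{p,\mathbf x}(f)\geq r$ independently of the choices made. For an individual $\alpha$ with $r-l|\alpha|>l$, the condition $\nu_p(h_\alpha(\widehat{\mathbf x}))\geq r-l|\alpha|$ does depend on more than the first $l$ Witt components of the lift, but this is harmless: their conjunction equals the lift-invariant condition $\nu_{p,\mathbf x}(f)\geq r$ by Proposition~\ref{prop generic val first properties}(1), so any consistent lifting rule, such as the canonical one above, produces the same variety. Compatibility with base change along $k\subseteq k'$ is automatic since the defining polynomials live over $k$, and Corollary~\ref{cor gen val ext} ensures that the generic valuation for points of $\mathbb A^{n\cdot l}(k')$ is computed the same way over $\EE=W(k')$ as over $\OO$.
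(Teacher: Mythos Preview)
Your argument is correct and follows essentially the same route as the paper: pick the canonical lift (first $l$ Witt components given by $\mathbf x$, the rest zero), use Witt vector arithmetic to turn the relevant $p$-divisibility conditions into polynomial equations over $k$, and observe that the resulting closed set is independent of $k'$ by Corollary~\ref{cor gen val ext}. The only organisational difference is that you first Taylor-expand $f(\widehat{\mathbf x}+p^l\mathbf Z)$ in $\OO[X_1,\ldots,X_n][Z_1,\ldots,Z_n]$ and then pass each $h_\alpha(\widehat{\mathbf x}_{\rm can})$ through Witt components, whereas the paper computes the Witt components of $f(\widehat{\mathbf x}_{\rm can}+p^l\widehat{\mathbf z})$ in one go and then extracts the coefficients in the $Z$-Witt-variables; the paper's ordering sidesteps the ``delicate point'' you raise about individual $h_\alpha$-conditions being lift-dependent, but your resolution of that point is fine.
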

	\begin{proof}
		Fix an algebraically closed $k'\supseteq k$, and set $\EE=W(k')$. 
		Let
		 $\mathbf x \in \mathbb A^{n\cdot l}(k')$, and let $\widehat{\mathbf x}\in \EE^n$ be the element reducing to it such that the first $l$ components of the Witt vector $\widehat x_i$ are given by the 
		 $x_{i,j}$ for $0\leq j \leq l-1$, and all other components are zero (note that we use two indices to refer to the $n\cdot l$ entries of $\mathbf x$, which is more natural since we want to think of $\mathbf x$ as an element of $k^{n\times l}$, rather than $k^{n\cdot l}$).
		As $\EE$ is the ring of Witt vectors over $k'$ and $f$ is defined over $k$, we get polynomials ${f_i\in k[X_{1,0},\ldots, X_{1,l-1}, \ldots, X_{n,0},\ldots, X_{n,l-1}, Z_{1,j},\ldots, Z_{n,j} \ | \ 0\leq j \leq i]}$ (where $i\in \Z_{\geq 0}$) 
		such that $f(\widehat {\mathbf x}+p^l\cdot \widehat {\mathbf z})$ (for arbitrary $\widehat {\mathbf z}\in \EE^n$) is given by the Witt vector whose $i$-th component is
		the evaluation of $f_i$ at $\mathbf x$ and $\rho_{i+1}(\widehat {z}_1), \ldots, \rho_{i+1}(\widehat {z}_n)$.
		The polynomials $f_i$ do not depend on $k'$.

		 Now $\nu_{p,\mathbf x}(f) \geq r$ if and only if, for all 
		$0\leq i < r$, $\mathbf x$ is a zero of all coefficients of $f_i$ as a polynomial in $ k[X_{1,0},\ldots, X_{1,l-1}, \ldots, X_{n,0},\ldots, X_{n,l-1}][Z_{1,j},\ldots, Z_{n,j} \ | \ 0\leq j \leq i]$. Hence we can define $\mathcal V_{l,\nu_{p,-}(f)\geq r}$ as the zero locus of these coefficients, which are elements of $ k[X_{1,0},\ldots, X_{1,l-1}, \ldots, X_{n,0},\ldots, X_{n,l-1}]$.
	\end{proof}


	\section{Smooth families of lattices and rigidity}

	In this section we will prove the main ingredient going into Theorem~A, which is Theorem~\ref{thm smooth family open subset} below. To do this, we first need to introduce the structure we use to endow parametric families of lattices with the structure of a variety over $k$. We call this structure a \emph{smooth family of $\Lambda$-lattices}. Its definition is  quite straight-forward, and it is not difficult to show that we can parametrise all $\Lambda$-lattices of a given index in a fixed lattice by finitely many such families (see \S\ref{section lattice var}). Note that lattices in a smooth family as defined below have isomorphic $K$-span, and therefore the same character in case $\Lambda$ is a block or a group algebra.

	\begin{defi}\label{defi smooth family}
		A \emph{smooth family of $\Lambda$-lattices} $L(-)$ is a pair $(\Delta, \mathcal Z \cap \mathcal U)$, where 
		\begin{enumerate}
			\item $\Delta: \Lambda \longrightarrow K[X_1, \ldots, X_n]^{m\times m}$ (for certain $m,n\in \N$) is  a representation,
			\item $\mathcal Z \cap \mathcal U$ is the intersection of  an irreducible closed subvariety $\mathcal Z \subseteq \mathbb A^{n\cdot l}=\mathbb A^l\times \ldots \times \mathbb A^l$, and open subvariety $\mathcal U \subseteq  \mathbb A^{n\cdot l}=\mathbb A^l\times \ldots \times \mathbb A^l$ (for some $l\in \N$), both defined over $k$,
		\end{enumerate}
		such that the following hold:
		\begin{enumerate}
		\item ${\rm sing}(\mathcal Z)\cap \mathcal U = \emptyset$.
		\item Let $\EE=W(k')$ for some algebraically closed field $k'\supseteq k$.
		If $\widehat{\mathbf x} \in \EE^n$ reduces to a point $\mathbf x = \rho_l(\mathbf{\widehat x}) \in \mathcal Z(k') \cap \mathcal U(k')$
		then
		\begin{equation}
			\Delta_{\widehat {\mathbf x}}:\ \Lambda \longrightarrow \EE^{m\times m}:\ \lambda \mapsto \Delta(\lambda)|_{(X_1,\ldots, X_n)=(\widehat x_1, \ldots, \widehat x_n)}
		\end{equation}
		is well-defined, and the isomorphism type of the corresponding $\EE\otimes_{\OO}\Lambda$-lattice only depends on $\mathbf x$.
		\end{enumerate}
	\end{defi}

	A few remarks are in order. Firstly, the elements of $\mathbb A^{n\cdot l}(k')$ represent the first $l$ Witt vector components of elements in $\EE^n$. To reflect this fact we identify the coordinate ring of $\mathbb A^{n\cdot l}$ with $k[X_{i,j}\ | \ 1\leq i \leq n,\ 0\leq j \leq l-1]$. A second thing to note is that the reason we are considering extensions $\EE$ of $\OO$ is that we need those to specialise at ``generic points'' in the proof of Theorem~\ref{thm smooth family open subset} below. In applications we actually only require the property that specialisation at points in $\mathcal Z(k)\cap \mathcal U(k)$ is well-defined. The last thing we should note is that the requirement that $\mathcal Z\cap \mathcal U$ be smooth and irreducible only serves to get uniqueness in Corollary~\ref{cor rigid unique} and make the proof of Theorem~\ref{thm smooth family open subset} slightly nicer. As such, its inclusion in the definition is really a matter of preference.	

	\begin{notation}
		In the situation of Definition~\ref{defi smooth family} we denote the $\EE\otimes_\OO\Lambda$-lattice  corresponding to a point $\mathbf x \in \mathcal Z(k')\cap \mathcal U(k')$ by $L({\mathbf x})$. Of course $L({\mathbf x})$ is only defined up to isomorphism.
		We say that $L(-)=(\Delta, \mathcal Z\cap \mathcal U)$ \emph{contains} (the isomorphism class of) $L({\mathbf x})$. We also refer to the (common) $\OO$-rank of $\Lambda$-lattices contained in $L(-)$ as the ``$\OO$-rank of $L(-)$''. 
	\end{notation}

	\begin{thm}\label{thm smooth family open subset}
		If $L(-)=(\Delta, \mathcal Z\cap \mathcal U)$ is a smooth family of $\Lambda$-lattices, then the subset of ${\mathcal U(k) \cap \mathcal Z(k)}$ such that the corresponding $\Lambda$-lattices are isomorphic to some fixed rigid $\Lambda$-lattice contains a Zariski-open subset.
	\end{thm}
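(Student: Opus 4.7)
My plan is to prove that $S=\{\mathbf x\in \mathcal Z(k)\cap \mathcal U(k) : L(\mathbf x)\cong L\}$ is itself Zariski-open in $\mathcal Z\cap \mathcal U$, which a fortiori gives the stated containment. I would first fix $\mathbf x_0\in S$ and a representation $\Delta_0$ affording $L$; some $g\in \GL_m(\OO)$ conjugates $\Delta_{\widehat{\mathbf x}_0}$ to $\Delta_0$, and replacing $\Delta$ by $g^{-1}\Delta g$ (which affects neither denominators nor Witt-valuation conditions) I may assume $\Delta_{\widehat{\mathbf x}_0}=\Delta_0$ on the nose. The common $K$-span $V=K\otimes L(\mathbf x)\cong K\otimes L$ is constant up to isomorphism, since the character $\lambda\mapsto \operatorname{tr}\Delta(\lambda)\in K[X_1,\dots,X_n]$ takes values in the discrete set of characters of $m$-dimensional $A$-modules and hence is forced to be constant on the irreducible $\mathcal Z$; setting $d=\dim_K\End_A(V)$, the $\OO$-module $\Hom_\Lambda(L,L(\mathbf x))$ is free of rank $d$ throughout $\mathcal Z\cap \mathcal U$.

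The first main step is a long exact sequence argument. Writing $\bar L = L/pL$ and $\bar L(\mathbf x) = L(\mathbf x)/pL(\mathbf x)$, applying $\Hom_\Lambda(L,-)$ to $0\to L(\mathbf x)\stackrel{p}{\to}L(\mathbf x)\to \bar L(\mathbf x)\to 0$ and using that $\Ext^1_\Lambda(L,L(\mathbf x))$ is finitely generated $p$-torsion (because $A$ is separable) yields
\begin{equation*}
\dim_k\Hom_{k\Lambda}(\bar L,\bar L(\mathbf x))\geq d,
\end{equation*}
with equality if and only if $\Ext^1_\Lambda(L,L(\mathbf x))=0$. Since $\bar L(\mathbf x)$ depends polynomially on $\mathbf x$ (via the canonical lift with zero higher Witt components, say), the family $\bar L(-)$ is a morphism $\mathcal Z\cap \mathcal U\to \operatorname{Rep}_m(k\Lambda)$, so upper semi-continuity of $\dim\Hom$ together with the rigidity of $L$ at $\mathbf x_0$ (which gives equality there) produces a Zariski-open neighborhood $U\ni \mathbf x_0$ on which $\dim_k\Hom_{k\Lambda}(\bar L,\bar L(\mathbf x))=d$. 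On $U$ one has $\Ext^1_\Lambda(L,L(\mathbf x))=0$, so the reduction $\Hom_\Lambda(L,L(\mathbf x))\twoheadrightarrow\Hom_{k\Lambda}(\bar L,\bar L(\mathbf x))$ is surjective.

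The second main step would upgrade this dimension equality to an actual isomorphism via the Grassmannian. On $U$ the rank-$d$ subspace $\Hom_{k\Lambda}(\bar L,\bar L(\mathbf x))\subseteq M_m(k)$ varies algebraically in $\mathbf x$, giving a morphism $\phi\colon U\to \operatorname{Gr}(d,m^2)$. The locus
\begin{equation*}
\mathcal G=\{P\in \operatorname{Gr}(d,m^2) : P\cap \GL_m(k)\neq \emptyset\}
\end{equation*}
is Zariski-open, being the image of the intersection of the tautological rank-$d$ bundle $\mathcal S\subseteq \operatorname{Gr}\times M_m$ with the open subset $\operatorname{Gr}\times \GL_m$ under the smooth (hence open) bundle projection. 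Since $\phi(\mathbf x_0)=\End_{k\Lambda}(\bar L)$ contains $\id_{\bar L}\in \GL_m(k)$, we have $\phi(\mathbf x_0)\in \mathcal G$, and $U'=\phi^{-1}(\mathcal G)$ is a Zariski-open neighborhood of $\mathbf x_0$. For each $\mathbf x\in U'$ I would pick an invertible $\bar\phi\in \Hom_{k\Lambda}(\bar L,\bar L(\mathbf x))\cap \GL_m(k)$, lift via the surjection to $\phi\in \Hom_\Lambda(L,L(\mathbf x))$, and apply Nakayama's lemma together with equality of $\OO$-ranks to conclude $\phi$ is an isomorphism of $\Lambda$-lattices; thus $L\cong L(\mathbf x)$ on $U'$.

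The hardest part will be translating the rigidity of $L$---ostensibly information about the single point $\mathbf x_0$---into an open condition on $\mathcal Z\cap \mathcal U$. My strategy splits this into two distinct openness steps: first, semi-continuity of $\dim\Hom$ in the family $\bar L(-)$ gives $\Ext^1$-vanishing on a Zariski-open neighborhood of $\mathbf x_0$; second, the Grassmannian openness of ``contains an invertible element''---the crucial geometric ingredient---converts the dimension equality of mod-$p$ intertwiner spaces into the existence of an actual mod-$p$ isomorphism, which then lifts via Ext-vanishing and Nakayama.
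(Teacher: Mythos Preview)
Your strategy is sound and genuinely different from the paper's. The paper never reduces modulo $p$; instead it lifts a generic point $\mathbf w$ of $\mathcal Z$ to an element $\widehat{\mathbf w}$ of a power-series ring $\OO[[S_1,\ldots,S_r]]$ (this is where smoothness and the Witt-vector formalism enter), expands $\Delta_{\widehat{\mathbf w}}$ around $\Delta_{\widehat{\mathbf w}(0)}=\Delta_{\widehat{\mathbf x}_0}$, and uses $\Ext^1(L(\mathbf x_0),L(\mathbf x_0))=0$ to conjugate the perturbation terms away inductively, yielding $L(\mathbf w)\cong \EE\otimes L(\mathbf x_0)$ over a large extension. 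The conjugating matrix is then pulled back to $K(X_1,\ldots,X_n)$, and the generic-valuation machinery of \S2 carves out an open locus on which it specialises into $\GL_m(\OO)$. Your approach replaces all of this with standard semi-continuity of $\dim\Hom$ for a family of $k\Lambda$-modules together with a Grassmannian argument, which is considerably more elementary and avoids \S2 entirely; it also yields the stronger conclusion that $S$ itself is open, whereas the paper's open set is built around the generic point and need not visibly contain $\mathbf x_0$.

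There is, however, one technical gap you should not gloss over: the claim that $\bar L(\mathbf x)$ depends \emph{polynomially} on $\mathbf x$ is false as stated. Writing $\Delta(\lambda)_{ij}=p^{-v}f_{ij}$ with $f_{ij}\in\OO[X_1,\ldots,X_n]$ and taking the canonical lift $\widehat{\mathbf x}$, the Witt components of $f_{ij}(\widehat{\mathbf x})$ are polynomials $w_0(\mathbf x),w_1(\mathbf x),\ldots$ in the coordinates of $\mathbf x$, but the reduction of $p^{-v}f_{ij}(\widehat{\mathbf x})$ modulo $p$ is $w_v(\mathbf x)^{1/p^{v}}$, which involves an inverse Frobenius. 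Thus $\mathbf x\mapsto\overline{\Delta_{\widehat{\mathbf x}}}$ is not a morphism of $k$-varieties, and neither your semi-continuity step nor your Grassmannian morphism $\phi$ is justified as written. The fix is painless: the entries of the coefficient matrix $C(\mathbf x)$ of the linear system for $\Hom_{k\Lambda}(\bar L,\bar L(\mathbf x))$ lie in $k[x_{i,j}^{1/p^v}]$, and since we are in characteristic $p$ the entry-wise $p^v$-th power $C(\mathbf x)^{[p^v]}$ has entries in $k[x_{i,j}]$. As $(-)^{p^v}$ is a field automorphism of $k$ it preserves ranks and carries $\ker C\cap\GL_m$ bijectively onto $\ker C^{[p^v]}\cap\GL_m$, so both of your openness steps go through with $C^{[p^v]}$ in place of $C$.
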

	\begin{proof}
		Assume that we have an $\mathbf x \in \mathcal Z(k)\cap \mathcal U(k)$ such that $L({\mathbf x})$ is rigid.
		Let $\mathfrak m$ be the ideal of elements of $k[\mathcal Z]$ vanishing at $\mathbf x$. As $\mathcal Z$ is smooth at $\mathbf x$, the completion of the local ring $k[\mathcal Z]_{\mathfrak m}$ is isomorphic to 
		$k[[T_1,\ldots, T_{\dim \mathcal Z}]]$. Hence we get a map ${\varphi:\ k[X_{1,0},\ldots,X_{1,l-1},\ldots, X_{n,0},\ldots, X_{n,l-1}]\longrightarrow k[[T_1,\ldots, T_r]]}$, where $r=\dim (\mathcal Z)+n$, with image contained in $k[[T_1,\ldots,T_{\dim\mathcal Z}]]$,  which factors through $k[\mathcal Z]$, and whose composition with the evaluation map $k[[T_1,\ldots,T_r]]\twoheadrightarrow k$ is the same as evaluation at $\mathbf x$ (the $n$ spare variables will come in handy later on). 
		Define ${\mathbf w = (\varphi(X_{i,j}) \ |\ 1\leq i\leq n, 0 \leq j\leq l-1)\in k[[T_1,\ldots,T_r]]^{n\cdot l}}$.
		Then $\mathbf w$ lies in $\mathcal Z(\overline{k((T_1,\ldots, T_r))})$, and $\mathbf w(0,\ldots,0)=\mathbf x$. Here $\mathbf w(0,\ldots,0)$ denotes the evaluation of $\mathbf w$ at $(T_1,\ldots,T_r)=(0,\ldots,0)$. 
		
		Define
		\begin{equation}
			R=\bigcup_{i=0}^\infty \OO[[T_1^{1/p^{i}},\ldots,T_r^{1/p^{i}}]] \quad\textrm{and}\quad \EE_0=\varprojlim R/p^i R.
		\end{equation}
		That is, $\EE_0$ is the $p$-adic completion of $R$. The residue ring $\EE_0/p\EE=R/pR$ can be identified with
		\begin{equation}
			  \bar R=\bigcup_{i=0}^\infty k[[T_1^{1/p^{i}},\ldots,T_r^{1/p^{i}}]],
		\end{equation}
		which is a perfect ring of characteristic $p$.  In particular $\EE_0$  is a strict $p$-ring. Hence $\EE_0$ is isomorphic to $W(\bar R)$ by a unique isomorphism preserving the surjection onto $\bar R$. Now $W(\bar R)$ embeds into  $\EE=	 W(k')$, where 
		${k'=\overline{k((T_1,\ldots,T_r))}}$. Long story short, we get an embedding of $\OO$-algebras $R \hookrightarrow \EE = W(k')$ factoring through an embedding of rings of Witt vectors $\EE_0\hookrightarrow \EE$.
		
		Now choose, for each $1\leq i\leq n$, a $\widehat w_i'\in R$ such that $\rho_l(\widehat w'_i)=(w_{i,0},\ldots, w_{i,l-1})$. 
		The reason we can do this is that $\rho_l:\ \EE_0 \longrightarrow \bar R ^l$ factors through $\EE_0/p^l\EE_0$, and $R$ surjects onto $R/p^lR \cong \EE_0/p^l\EE_0$, the latter two being canonically isomorphic since $\EE_0$ is the completion of $R$. Thus we get a $\widehat{\mathbf w}'=(\widehat w'_1,\ldots,\widehat w'_n)\in R^n$ such that 
		$\rho_l(\mathbf{\widehat w'})=\mathbf w$. Since $R$ is defined as a union of rings, we actually get that 
		\begin{equation}
		\mathbf{\widehat w'}\in \OO[[S_1,\ldots,S_r]]^n\textrm{ where $S_1=T_1^{1/p^{e}},\ldots, S_r=T_r^{1/p^{e}}$ for some $e\in \N$.} 
		\end{equation}
		Since $\mathbf w$ only involves the indeterminates $T_1,\ldots,T_{\dim \mathcal Z}$, we can actually assume that ${\mathbf{\widehat w'}\in \OO[[S_1,\ldots,S_{\dim \mathcal Z}]]^n}$. We then define 
		$\widehat w_i = \widehat w_i' + p^l\cdot S_{\dim \mathcal Z+i}$ for $1\leq i \leq n$. The upshot is 
		that $\mathbf{\widehat w}=(\widehat w_1,\ldots,\widehat w_n)\in \OO[[S_1,\ldots,S_r]]^n$ satisfies $\rho_l(\mathbf{\widehat w})=\mathbf w$ and the individual entries $\widehat w_i$ are algebraically independent over $K$. Moreover, $\rho_l(\mathbf{\widehat w}(0,\ldots,0))=\mathbf x$, since $\rho_l$ commutes with evaluation at $(0,\ldots,0)$ by the universal property of Witt vectors. Note that our construction of $\mathbf{\widehat w}$ also ensures that $\nu_{p,\mathbf w}(g)= \nu_p(g(\widehat{\mathbf w}))$ for all $g\in \OO[X_1,\ldots,X_n]$ (where we view $\widehat{\mathbf w}$ as an element of $\EE^n$), since $\mathbf{\widehat w}$ is polynomial in the spare variables $T_{\dim \mathcal Z+i}$, and therefore 
		\begin{equation}
		\nu_{p,\mathbf w}(g)=\nu_p(g(\widehat{w}'_1 + p^l\cdot T_{\dim\mathcal Z+1}, \ldots, \widehat{w}'_n + p^l\cdot T_{\dim\mathcal Z+n}))=\nu_p(g(\widehat{\mathbf w})),
		\end{equation}
		which is seen by
		using Proposition~\ref{prop generic val first properties} and the fact that $\nu_p$ is independent of whether we regard the $T_{\dim\mathcal Z+i}$ as indeterminates or as elements of the valuation ring $\EE$.

		 Using $\widehat{\mathbf w}$ we can now define an $\OO$-algebra homomorphism 
		 \begin{equation}
		 \widehat \varphi:\ \OO[X_1,\ldots, X_n]\longrightarrow \OO[[S_1,\ldots, S_r]]: X_i \mapsto \widehat w_i
		 \end{equation} 
		 ``lifting'' $\varphi$. Our assumptions ensure  that $\widehat \varphi$ is injective, which will be important later.
		 Moreover, the element $\widehat{\mathbf{w}}$, by construction, gives us representations $\Delta_{\widehat{\mathbf w}}$ 
		 and  $\Delta_{\widehat{\mathbf w}(0,\ldots,0)}$, affording $L({\mathbf w})$ and $L({\mathbf x})$, respectively. 	
		 A priori, the image of $\Delta_{\widehat{\mathbf w}}$ is only contained in $m\times m$-matrices over 
		 $\EE$. However, since the images of $\Delta_{\widehat{\mathbf w}}$ are obtained from the images of $\Delta$ (which live in $K[X_1,\ldots,X_n]^{m\times m}$) by substituting $X_i=\widehat w_i$, we actually get that the image of $\Delta_{\mathbf{\widehat w}}$ is also contained in $m\times m$-matrices over $K\cdot \OO[[S_1,\ldots, S_r]]$. Hence the images of $\Delta_{\mathbf{\widehat w}}$ actually have entries in $\EE \cap (K\cdot \OO [[S_1,\ldots, S_r]])=\OO [[S_1,\ldots,S_r]]$. 
		 
		 It now follows that
		 \begin{equation}\label{eqn expansion delta}
		 	\Delta_{\widehat{\mathbf w}}(\lambda) = \Delta_{\widehat{\mathbf w}(0,\ldots,0)}(\lambda)+\sum_{\mathbf i\in \Z_{\geq 0}^r\setminus \{(0,\ldots,0)\}} \Gamma_{\mathbf i}(\lambda) \cdot S_1^{i_1}\cdots S_r^{i_r}\quad \textrm{for all $\lambda \in \Lambda$,}
		 \end{equation}
		 where each $\Gamma_{\mathbf i}$ is a map from $\Lambda$ into $\OO^{m\times m}$. Assume that at least one of the $\Gamma_{\mathbf i}$ is not the zero map, and  let $\mathbf j\in \Z_{\geq 0}^r$ be degree-lexicographically minimal such that $\Gamma_{\mathbf j}$ is non-zero. Then 
		 \begin{equation}
		 	\Gamma_{\mathbf j}(\lambda\cdot \gamma)= \Delta_{\widehat{\mathbf w}(0,\ldots,0)}(\lambda)\cdot \Gamma_{\mathbf j}(\gamma) + 
		 	\Gamma_{\mathbf j}(\lambda)\cdot  \Delta_{\widehat{\mathbf w}(0,\ldots,0)}(\gamma)
		 \end{equation}
		 for any $\lambda,\gamma \in \Lambda$. That is, $\Gamma_{\mathbf j}$ defines an element of 
		 $\Ext^1_{\Lambda}(L({\mathbf x}), L({\mathbf x}))$, which we assumed was zero. Hence there is a $\mathbf B \in \OO^{m\times m}$ such that, for any $\lambda \in \Lambda$, $\Gamma_{\mathbf j}(\lambda)= \Delta_{\widehat{\mathbf w}(0,\ldots,0)}(\lambda) \cdot \mathbf B - \mathbf B \cdot \Delta_{\widehat{\mathbf w}(0,\ldots,0)}(\lambda)$. Conjugating $\Delta_{\widehat{\mathbf w}}$ by $\id + \mathbf B\cdot S_1^{j_1}\cdots S_r^{j_r}$ yields a representation with an expansion as in  \eqref{eqn expansion delta} where the degree-lexicographically smallest $\mathbf i$ with $\Gamma_{\mathbf i}\neq 0$ is strictly bigger than $\mathbf j$. Iterating this process gives a sequence of conjugating elements in $\GL _m(\OO[[S_1,\ldots S_r]])$ that converge with respect to the topology induced by the realisation of $\OO[[S_1,\ldots, S_r]]$ as $\varprojlim \OO[S_1,\ldots, S_r]/(S_1,\ldots,S_r)^i$. The limit of these elements will conjugate $\Delta_{\widehat{\mathbf w}}$ to $\Delta_{\widehat{\mathbf w}(0,\ldots,0)}$.
		 
		 We now know that $\EE\otimes_{\OO} L({\mathbf x})$ is isomorphic to $L({\mathbf w})$, as these are the modules afforded by the two representations we just showed are conjugate. In elementary terms, this means that the linear system of equations 
		 \begin{equation}\label{eqn hom lats}
		 	\Delta_{\widehat{\mathbf{w}}}(\lambda) \cdot \mathbf M - \mathbf M\cdot \Delta_{\widehat{\mathbf{w}}(0,\ldots, 0)}(\lambda) \quad \textrm{(for $\lambda$ in a basis of $\Lambda$)}
		 \end{equation}
		 has a solution $\mathbf M\in \GL_m(\EE)$. However, the coefficients of the system of equations 
		 \eqref{eqn hom lats} lie in the subring of $\EE$ generated by elements of the discrete valuation ring $R_{pR}$ which have  preimages in $K(X_1,\ldots, X_n)$ under the map $\widehat \varphi$ from earlier (extended to fields of fractions). Hence the $\EE$-lattice of solutions to  \eqref{eqn hom lats} has a basis consisting of matrices $\mathbf M_1,\ldots,\mathbf M_d$ (with $d=\rank_\EE \Hom(L({\mathbf w}), \EE\otimes_{\OO}L({\mathbf x}))$) with entries in the aforementioned subring.  Now the fact that 
		 $L_{\mathbf w}$ and  $\EE\otimes_{\OO}L_{\mathbf x}$ are isomorphic is equivalent to the assertion that the polynomial
		 $$
		 	\det(\mathbf M_1 \cdot Z_1 + \ldots + \mathbf M_d \cdot Z_d) \in \EE[Z_1, \ldots, Z_d]
		 $$
		 has $p$-valuation zero (i.e. some coefficient has $p$-valuation zero). But then one can easily find $\mathbf z\in \OO^d$ (or even $W(\bar{\mathbb F}_p)^d$) such that the $p$-valuation of $\det(\mathbf M_1\cdot z_1+\ldots+\mathbf M_d\cdot z_d)$ is zero. Then $\mathbf M=\mathbf M_1\cdot z_1+\ldots+ \mathbf M_d\cdot z_d$ is an element of $\GL_m(\EE)$ with entries that have preimages in $K(X_1,\ldots, X_n)$ such that $\mathbf M^{-1} \cdot \Delta_{\widehat{\mathbf w}}(\lambda) \cdot \mathbf M = \Delta_{\widehat{\mathbf w}(0,\ldots,0)}(\lambda)$ for all $\lambda \in \Lambda$. Now recall that $\Delta_{\widehat{\mathbf w}}(\lambda)$ is obtained from $\Delta(\lambda)$ by entry-wise application of $\widehat \varphi$ (again, extended to fields of fractions). Let $\mathbf M'$ be a preimage under $\widehat \varphi$ of $\mathbf M$, that is, $\mathbf M'$ has entries in $K(X_1,\ldots,X_n)$. Then $\mathbf M'^{-1}\cdot \Delta(\lambda) \cdot \mathbf  M'$ must be a preimage under $\widehat \varphi$ of $\Delta_{\widehat{\mathbf w}(0,\ldots,0)}(\lambda)$ (for any $\lambda \in \Lambda$). But since $\widehat \varphi$ is injective by construction, we get 
		 \begin{equation}\label{eqn conj}
		 	\mathbf M'^{-1}\cdot \Delta(\lambda) \cdot \mathbf M' = \Delta_{\widehat{\mathbf{w}}(0,\ldots,0)}(\lambda) \quad \textrm{for all $\lambda \in \Lambda$,} 
		 \end{equation}  
		 which is now an equation entirely in $K(X_1,\ldots, X_n)={\rm frac}(\OO[X_1,\ldots, X_n])$. 
		 
		 Now let $\mathbf y$ be another point in $\mathcal U(k)\cap \mathcal Z(k)$, and let $\widehat{\mathbf y}\in \OO^n$ be an element such that $\rho_l(\widehat{\mathbf y})=\mathbf y$. Since $\Delta_{\widehat{\mathbf y}}$ is obtained from $\Delta$ by substituting $X_i=\widehat y_i$, equation~\eqref{eqn conj} implies that $L({\mathbf y})\cong L({\mathbf x})$ provided $\mathbf M'|_{(X_1,\ldots,X_n)=\widehat {\mathbf y}}\in \GL_m(\OO)$. Note that all $g\in \OO[X_1,\ldots,X_n]$ which occur as numerators or denominators of either $\mathbf M'$ or $\mathbf M'^{-1}$ satisfy $\nu_{p,\mathbf w}(g)=\nu_p(g(\widehat{\mathbf w}))$, and substituting $(X_1,\ldots,X_n)=\widehat{\mathbf w}$ in $\mathbf M'$ and $\mathbf M'^{-1}$ gives back $\mathbf M$ and $\mathbf M^{-1}$ by definition. By Proposition~\ref{prop gen val zariski closed} there are closed subvarieties $\mathcal V_{l,\nu_{p,-}(g)\geq\nu_{p,\mathbf w}(g)}$ and $\mathcal V_{l,\nu_{p,-}(g)\geq\nu_{p,\mathbf w}(g)+1}$ of $\mathbb A^{n\cdot l}$ defined over $k$ such that $\mathbf w$ lies in $\mathcal V_{l,\nu_{p,-}(g)\geq\nu_{p,\mathbf w}(g)}(k')$ but not in $\mathcal V_{l,\nu_{p,-}(g)\geq\nu_{p,\mathbf w}(g)+1}(k')$. Since $\mathbf w$ was chosen as a generic point for $\mathcal Z$, any subvariety of $\mathbb A^{n\cdot l}$ defined over $k$  contains $\mathbf w$ if and only if it contains $\mathcal Z$. It follows that $(\mathcal V_{l,\nu_{p,-}(g)\geq\nu_{p,\mathbf w}(g)} \setminus \mathcal V_{l,\nu_{p,-}(g)\geq\nu_{p,\mathbf w}(g)+1})\cap \mathcal Z$ is an open subvariety of $\mathcal Z$, whose $k$-rational points are by definition those $\mathbf y$ for which $\nu_{p, \mathbf y}(g)=\nu_{p, \mathbf w}(g)$.
		 
		  We conclude that there is an open subvariety $\mathcal V$ of $\mathcal U$  such that for any $\mathbf y\in \mathcal V(k)$ there is a $\widehat {\mathbf y}$ for which $\mathbf M'|_{(X_1,\ldots,X_n)=\widehat {\mathbf y}}$
		 and $\mathbf M'^{-1}|_{(X_1,\ldots,X_n)=\widehat {\mathbf y}}$ lie in $\OO^{m\times m}$ (in fact, the valuation of each entry is the same as that of the corresponding one of $\mathbf M$). Hence $L({\mathbf y})\cong L({\mathbf x})$ for all $\mathbf y \in \mathcal V(k)$, which completes the proof.
	\end{proof}

	\begin{corollary}\label{cor rigid unique}
		A smooth family of $\Lambda$-lattices contains at most one isomorphism class of rigid lattices. 
	\end{corollary}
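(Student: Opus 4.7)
The plan is to deduce this corollary directly from Theorem~\ref{thm smooth family open subset} together with the irreducibility of $\mathcal Z$. Suppose, for contradiction, that the smooth family $L(-) = (\Delta, \mathcal Z \cap \mathcal U)$ contains two rigid $\Lambda$-lattices $L_1$ and $L_2$, realised as $L(\mathbf x_1)$ and $L(\mathbf x_2)$ for some $\mathbf x_1, \mathbf x_2 \in \mathcal Z(k)\cap \mathcal U(k)$.

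Applying Theorem~\ref{thm smooth family open subset} to each of these points, I obtain non-empty Zariski-open subsets $\mathcal V_1, \mathcal V_2 \subseteq \mathcal Z \cap \mathcal U$ such that $L(\mathbf y) \cong L_i$ for every $\mathbf y \in \mathcal V_i(k)$. Now $\mathcal Z$ is irreducible by hypothesis, and $\mathcal U$ is open with $\mathcal Z \cap \mathcal U$ non-empty, so $\mathcal Z \cap \mathcal U$ is itself irreducible. In an irreducible variety, any two non-empty open subsets meet, so $\mathcal V_1 \cap \mathcal V_2$ is a non-empty open subvariety of $\mathcal Z \cap \mathcal U$.

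Since $k$ is algebraically closed, $\mathcal V_1 \cap \mathcal V_2$ has a $k$-rational point $\mathbf y_0$. For this point, the definition of the $\mathcal V_i$ forces $L(\mathbf y_0) \cong L_1$ and $L(\mathbf y_0) \cong L_2$ simultaneously, and therefore $L_1 \cong L_2$. There is no real obstacle here, as all the substantive work is already contained in Theorem~\ref{thm smooth family open subset}; the only ingredients I need beyond it are that a non-empty open subset of an irreducible variety is irreducible and dense, and that irreducibility ensures non-empty opens intersect. The role of the smoothness and irreducibility assumptions in Definition~\ref{defi smooth family} becomes clear at exactly this point: irreducibility supplies the intersection, and smoothness was needed in the proof of Theorem~\ref{thm smooth family open subset} to guarantee that each rigid isomorphism class fills a Zariski-open piece.
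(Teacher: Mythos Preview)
Your proof is correct and follows essentially the same approach as the paper's: both apply Theorem~\ref{thm smooth family open subset} to two rigid lattices, obtain non-empty open subsets of the irreducible variety $\mathcal Z\cap\mathcal U$, intersect them, and read off the isomorphism at a common point. You spell out a couple of details the paper leaves implicit (that $\mathcal Z\cap\mathcal U$ is irreducible and that the intersection has a $k$-rational point because $k$ is algebraically closed), but there is no difference in strategy.
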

	\begin{proof}
		We assume $\mathcal Z$ to be irreducible, which means that any two non-empty Zariski-open subsets have non-trivial intersection. In particular, if the family contains two rigid lattices, then their respective sets of points parametrising lattices  isomorphic to them have non-trivial intersection. This implies that any two rigid lattices in the family must be isomorphic.
	\end{proof}

	\section{Varieties of lattices}\label{section lattice var}

	In this section we will show how to parametrise all $\Lambda$-lattices in a given finite-dimensional $K\otimes_\OO\Lambda$-module $V$, up to isomorphism. The idea is to start with a fixed lattice, and then to conjugate an affording representation by an upper-triangular basis matrix with ``generic entries'' above the diagonal. The condition that the specialisation of the resulting ``generic representation'' $\Delta$ at some $\widehat{\mathbf x}$ be integral is a closed condition on the Witt vector components of $\widehat{\mathbf x}$, which, after a minimal amount of work, gives sets $\mathcal Z \cap \mathcal U$ such that each $(\Delta, \mathcal Z\cap \mathcal U)$ defines a smooth family of $\Lambda$-lattices.  
	
	\begin{lemma}\label{lemma sublattice fams}
		Let $L$ be a $\Lambda$-lattice of rank $m\in \N$ and let $l$ be some non-negative integer. Then there are finitely many smooth families of 
		$\Lambda$-lattices $L_1(-),\ldots,L_d(-)$ ($d\in\N$) such that each $\Lambda$-sublattice $L'\leq L$ for which the quotient $L/L'$ has length $l$ as an $\OO$-module is contained in one of the $L_i(-)$.
	\end{lemma}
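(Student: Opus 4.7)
The plan is to parametrize sublattices $L' \leq L$ of colength $l$ by their Hermite-type ``shape'' and realize each shape as a union of smooth families obtained by generic upper-triangular conjugation of a representation $\Delta_0$ affording $L$. After fixing a basis of $L$, every such $L'$ arises as $T \cdot L$ for an upper-triangular matrix $T$ with diagonal $(p^{l_1}, \ldots, p^{l_m})$, where the $l_i \geq 0$ sum to $l$. The $\GL_m(\OO)$-action by left multiplication shows that $T_{ij}$ for $i < j$ is canonically determined modulo $p^{l_j}$, so sublattices of a given shape $\mathbf{l} = (l_1,\ldots,l_m)$ are parametrized by the first $l$ Witt vector components of the $\binom{m}{2}$ above-diagonal entries, and there are only finitely many such shapes.

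For each $\mathbf{l}$, introduce indeterminates $X_{ij}$ for $1 \leq i < j \leq m$, form the generic matrix $T$ with $T_{ii}=p^{l_i}$ and $T_{ij}=X_{ij}$, and set $\Delta^{\mathbf{l}}(\lambda) = T \Delta_0(\lambda) T^{-1}$. A direct computation (Cramer's rule, or the inductive formula for the inverse of an upper-triangular matrix) shows $T^{-1}$ has entries in $p^{-l}\OO[X_{ij}]$, so every entry of $\Delta^{\mathbf{l}}(\lambda)$ (for $\lambda$ in a fixed $\OO$-basis of $\Lambda$) has the form $g/p^d$ with $g \in \OO[X_{ij}]$ and $0 \leq d \leq l$. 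For a point $\mathbf{x} \in \mathbb{A}^{n\cdot l}$ (with $n = \binom{m}{2}$), requiring that every lift $\widehat{\mathbf{x}}$ make $\Delta^{\mathbf{l}}_{\widehat{\mathbf{x}}}$ integral amounts to the collection of conditions $\nu_{p,\mathbf{x}}(g) \geq d$, which by Proposition~\ref{prop gen val zariski closed} cut out a closed subvariety $\mathcal{Z}_{\mathbf{l}} \subseteq \mathbb{A}^{n\cdot l}$ defined over $k$. A short calculation shows that two lifts $\widehat{\mathbf{x}}, \widehat{\mathbf{x}}'$ of $\mathbf{x}$ (differing by an element of $p^l \EE^n$) yield matrices $T, T'$ with $T'T^{-1} = I + (T'-T)T^{-1}$ upper unitriangular with entries in $\EE$—the diagonal correction vanishes, and each off-diagonal term has valuation at least $l + (-l) = 0$—hence lies in $\GL_m(\EE)$. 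Consequently the sublattices $T \cdot (\EE \otimes_{\OO} L)$ and $T' \cdot (\EE \otimes_{\OO} L)$ coincide, so the isomorphism class of $L(\mathbf{x})$ depends only on $\mathbf{x}$.

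The subvariety $\mathcal{Z}_{\mathbf{l}}$ is in general neither irreducible nor smooth. Decomposing it into irreducible components and intersecting each with the complement of its singular locus produces finitely many smooth families $(\Delta^{\mathbf{l}}, \mathcal{Z}^{(i)}_{\mathbf{l}} \cap \mathcal{U}^{(i)}_{\mathbf{l}})$; the uncovered points lie in the union of the singular loci, a proper closed subvariety of strictly smaller dimension. A Noetherian-induction argument—equivalently, the fact that any variety over $k$ admits a finite stratification into smooth irreducible locally closed subvarieties—terminates after finitely many iterations, and ranging over the finitely many partitions $\mathbf{l}$ of $l$ produces the desired $L_1(-),\ldots,L_d(-)$. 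The main delicate point is turning the integrality of $\Delta^{\mathbf{l}}_{\widehat{\mathbf{x}}}$ into a Zariski-closed condition on truncated Witt vector coordinates, which is exactly the content of Proposition~\ref{prop gen val zariski closed}; a secondary subtlety, the lift-independence of the isomorphism class, reduces to the elementary matrix computation above.
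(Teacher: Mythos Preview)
Your proof is correct and follows essentially the same approach as the paper: fix the diagonal shape $(p^{l_1},\ldots,p^{l_m})$, conjugate a representation of $L$ by a generic upper-triangular matrix, cut out the integrality locus as a closed subvariety of truncated Witt vector space, verify lift-independence, and stratify into smooth irreducible pieces. The only cosmetic differences are that you invoke Proposition~\ref{prop gen val zariski closed} to obtain the closed condition where the paper writes out the Witt-component polynomials directly, and you check lift-independence via the computation $T'T^{-1}\in\GL_m(\EE)$ where the paper observes that the row space of $T$ always contains $p^l\EE^m$; these are equivalent.
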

	\begin{proof}
	Let $\Delta_L:\ \Lambda \longrightarrow \OO^{m\times m}$ be a representation affording $L$, and let us fix $v_1,\ldots, v_m\in \Z_{\geq 0}$ such that $v_1+\ldots+v_m=l$. Let us also fix an algebraically closed $k'\supseteq k$ and set $\EE=W(k')$ (this is just to formally verify the conditions of a smooth family of $\Lambda$-lattices). Consider the upper-diagonal matrix
	\begin{equation}
	\mathbf B=
	\left( \begin{array}{ccccc}
	p^{v_1} & X_{1} & X_{2} & \cdots & X_{m-1} \\
	0 & p^{v_2} & X_{m}& \cdots& X_{2m-3} \\
	0 & 0 & p^{v_3} & \cdots & X_{3m-6} \\
	\vdots & \vdots  & \vdots & \ddots & \vdots \\ 
	0 & 0 & 0 & \cdots & p^{v_m}\
	\end{array}\right) \in K[X_1,\ldots, X_{m(m-1)}]^{m\times m}
	\end{equation} 
	We will define finitely many smooth families of $\Lambda$-lattices such that every $\Lambda$-sublattice of $\OO^m$ (considered as a $\Lambda$-lattice via $\Delta_L$) which has an $\OO$-basis  consisting of the rows of $\mathbf B|_{(X_1,\ldots,X_{m(m-1)})=\widehat{\mathbf x}}$ for some $\widehat{\mathbf x}\in\OO^{m(m-1)}$ is contained in one of these families. That will actually prove the lemma, since every $\OO$-sublattice of $\OO^m$ with quotient of $\OO$-length $l$ has a basis given by the rows of a matrix of the same form as $\mathbf B$ for some $v_1,\ldots, v_m\in \Z_{\geq 0}$ with $v_1+\ldots+v_m=l$.
	
	First define
	\begin{equation}
	\Delta:\ \Lambda\longrightarrow K[X_1,\ldots,X_{m(m-1)}]^{m\times m}:\ \lambda\mapsto \mathbf B \cdot \Delta_L(\lambda)\cdot \mathbf B^{-1} 
	\end{equation}
	 Note that $p^l\cdot \Delta$ takes values in $\OO[X_1,\ldots,X_{m(m-1)}]^{m\times m}$, and for a given $\lambda\in \Lambda$ and $1\leq i,j\leq m$ we have that
	$p^l \cdot \Delta(\lambda)_{i,j}$ is an element  $f\in {\OO[X_1,\ldots,X_{m(m-1)}]}$. 
	From the theory of Witt vectors we get polynomials $f_0,\ldots, f_{l-1}\in {k[X_{i',j'} \ | \ 1\leq i' \leq m(m-1), 0 \leq j' \leq l-1]}$ such that 
	\begin{equation}
	\rho_l (f(\widehat{\mathbf x}))=\left(f_0(\rho_l(\widehat{\mathbf x})), \ldots, f_{l-1}(\rho_l(\widehat{\mathbf x}))\right)
	\end{equation}
	for all $\widehat{\mathbf x} \in \EE^{m(m-1)}$. 
	The intersection of the vanishing sets of the polynomials $f_0,\ldots, f_{l-1}$ defines a closed subvariety 
	$\mathcal X_{i,j,\lambda} \subseteq \mathbb A^{m(m-1)\cdot l}$ defined over $k$.
	By definition, the representation $\Delta_{\widehat{\mathbf x}}$ (for  $\widehat{\mathbf x} \in \EE^{m(m-1)}$) takes values in $\EE^{m\times m}$ if and only if $\widehat{\mathbf x}$ reduces to a point of $\mathcal X(k')$, where  
	\begin{equation}
		\mathcal X= \bigcap_{i,j=1}^m \bigcap_{\lambda} \mathcal X_{i,j,\lambda}\quad \textrm{ ($\lambda$ running over a basis of $\Lambda$)}
	\end{equation}
	Moreover, one checks that the row space of the matrix $\mathbf B$ specialised at any element of $\EE^{m(m-1)}$ contains $p^l\cdot \EE^m$, which implies that if $\widehat{\mathbf x}, \widehat{\mathbf y}\in \EE^{m(m-1)}$ reduce to the same point in $\mathcal X(k')$, then the respective row spaces of the matrices $\mathbf B|_{(X_1,\ldots,X_{m(m-1)})=\widehat{\mathbf x}}$ and $\mathbf B|_{(X_1,\ldots,X_{m(m-1)})=\widehat{\mathbf y}}$ are equal, which implies that the representations $\Delta_{\mathbf{\widehat x}}$ and $\Delta_{\mathbf{\widehat y}}$ are conjugate. 
	
	The only remaining problem is the fact that $\mathcal X$ is in general neither smooth nor irreducible. However, we can decompose $\mathcal X$ as a union of finitely many irreducible components, remove the singular loci from each irreducible component, then decompose the singular loci into irreducible components, and so on and so forth. We ultimately obtain finitely many $\mathcal Z_1,\ldots, \mathcal Z_d \subseteq \mathbb A^{m(m-1)\cdot l}$ closed 
	and $\mathcal U_1,\ldots, \mathcal U_d \subseteq \mathbb A^{m(m-1)\cdot l}$ open ($d\in \N$) such that
	\begin{equation}
		\mathcal X = \bigcup_{i=1}^d \mathcal Z_i \cap \mathcal U_i
	\end{equation}
	By construction, the $L_i(-)=(\Delta, \mathcal Z_i\cap \mathcal U_i)$ are smooth families of $\Lambda$-lattices such that each $\Lambda$-sublattice of $L$ with a basis of the same shape as $\mathbf B$ is contained in one of the $L_i(-)$, as required.
	\end{proof}
	
	\begin{thm}\label{thm lattice alg}
		Let $V$ be a finite-dimensional $K\otimes_\OO \Lambda$-module. Then there are finitely many smooth families of $\Lambda$-lattices 
		$M_1(-),\ldots, M_d(-)$ such that each full $\Lambda$-lattice $L\leq V$ is contained in one of the $M_i(-)$.
	\end{thm}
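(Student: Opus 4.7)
The plan is to reduce the statement to Lemma~\ref{lemma sublattice fams} by realising every isomorphism class of full $\Lambda$-lattice in $V$ as a $\Lambda$-sublattice of uniformly bounded $\OO$-colength inside a single fixed overlattice. The key device is a maximal $\OO$-order in $A=K\otimes_\OO\Lambda$.

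I would begin by fixing a maximal $\OO$-order $\Gamma$ in $A$ containing $\Lambda$. Since $\Gamma$ and $\Lambda$ are both full $\OO$-lattices in $A$, there exists an integer $c\geq 0$ with $p^c\Gamma\subseteq\Lambda$. Because $k$ is algebraically closed, the Brauer group of $K$ vanishes, so every simple factor of $A$ is a matrix algebra over $K$ and $\Gamma$ is accordingly Morita equivalent, as an $\OO$-algebra, to a product of copies of $\OO$. Consequently every full $\Gamma$-lattice in $V$ is isomorphic to one fixed lattice $M$, which is in particular a $\Lambda$-lattice of $\OO$-rank $m=\dim_K V$.

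For any full $\Lambda$-lattice $L\leq V$ I would then consider the overlattice $\Gamma L$, which is a full $\Gamma$-lattice in $V$ and hence $\Gamma$-isomorphic (and so $\Lambda$-isomorphic) to $M$. Choosing such an isomorphism identifies the inclusion $L\subseteq\Gamma L$ with an inclusion $L'\subseteq M$, where $L'$ is $\Lambda$-isomorphic to $L$. Moreover, $p^c\Gamma\subseteq\Lambda$ implies $p^c\cdot\Gamma L\subseteq\Lambda\cdot L=L$, so the $\OO$-length of $M/L'=\Gamma L/L$ is at most $cm$.

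Finally, for each $l\in\{0,1,\ldots,cm\}$, Lemma~\ref{lemma sublattice fams} applied to $M$ produces finitely many smooth families of $\Lambda$-lattices that jointly contain every $\Lambda$-sublattice of $M$ whose quotient has $\OO$-length $l$. Taking the union of these finite collections over the (finitely many) values of $l$ yields the desired finite list $M_1(-),\ldots,M_d(-)$ covering every isomorphism class of full $\Lambda$-lattice in $V$. The only step with genuine content is the finiteness, indeed uniqueness up to isomorphism, of full $\Gamma$-lattices in $V$, which over our algebraically closed residue field follows via Morita equivalence from the trivial fact that $\OO$-lattices of a given rank are unique up to isomorphism (and is in any case an instance of the classical local Jordan--Zassenhaus theorem for maximal orders); everything else is routine bookkeeping.
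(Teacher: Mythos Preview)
Your argument is correct and follows essentially the same route as the paper's: both use a maximal order $\Gamma\supseteq\Lambda$ to realise every full $\Lambda$-lattice in $V$, up to isomorphism, as a sublattice of bounded $\OO$-colength in one fixed lattice (the paper cites the conductor bound $L_0(\Gamma:\Lambda)^2\leq L\leq L_0$ for a fixed $\Lambda$-lattice $L_0$, while you pass through $\Gamma L\cong M$ and $p^c\Gamma\subseteq\Lambda$), and then invoke Lemma~\ref{lemma sublattice fams} for each admissible colength. One small caveat: since the simple factors of the separable algebra $A$ need not be \emph{central} over $K$, the maximal order $\Gamma$ is in general only Morita equivalent to a product of the rings of integers $\OO_i$ of the centres rather than to copies of $\OO$ itself, but each $\OO_i$ is still a complete discrete valuation ring, so your uniqueness conclusion for full $\Gamma$-lattices (and hence the rest of the argument) stands unchanged.
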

	\begin{proof}
		Fix some full $\Lambda$-lattice $L_0\leq V$.
		It is well-known that every full $\Lambda$-lattice in $V$ is isomorphic to a $\Lambda$-lattice $L$ such that $L_0\cdot (\Gamma:\Lambda)^2 \leq L \leq L_0$, where $(\Gamma : \Lambda)$ denotes the biggest two-sided $\Gamma$-ideal contained in $\Lambda$,  $\Gamma$ being a maximal order containing $\Lambda$. Therefore there is an upper bound $n\in \N $ on the composition length of $L_0/L$ as an $\OO$-module which depends only on $V$ and $\Lambda$. Now we can just apply Lemma~\ref{lemma sublattice fams} to $L_0$ for all $0\leq l\leq n$, such as to obtain finitely many smooth families of $\Lambda$-lattices containing all $\Lambda$-sublattices of~$L$, up to isomorphism. 
	\end{proof}

	\section{Proofs of the main theorems and applications}

	It should be fairly clear by now how Corollary~\ref{cor rigid unique} and Theorem~\ref{thm lattice alg} imply Theorem~A, and Theorem~B is an immediate consequence of that. We still include proofs for completeness' sake. 

	\begin{proof}[Proof of Theorem~A]
		As $K\otimes_{\OO} \Lambda$ is assumed to be separable, there are only finitely many isomorphism classes of $K\otimes_\OO\Lambda$-modules $V$ of dimension $\leq n$.
		By Theorem~\ref{thm lattice alg} there are, for each such $V$, finitely many smooth families of $\Lambda$-lattices such that each full $\Lambda$-lattice in $V$ is contained in one of these families. Hence every $\Lambda$-lattice of rank $\leq n$ is contained in one of finitely many smooth families of $\Lambda$-lattices, and by Corollary~\ref{cor rigid unique} each such family can contain at most one isomorphism class of rigid lattices.
	\end{proof}

	\begin{proof}[Proof of Theorem~B]
		We can assume without loss of generality that $\Lambda$ is basic. Then every element of $\Pic_{\OO}(\Lambda)$ is represented by a $\Lambda$-$\Lambda$-bimodule $\Lambda{_\alpha}$, where $\alpha \in \Aut_{\OO}(\Lambda)$.
		Define $\Lambda^{\rm e} = \Lambda^{\rm op}\otimes_{\OO} \Lambda$. Then $\Lambda^{\rm e}$ is again an $\OO$-order in a separable $K$-algebra, and we can view elements of $\Pic_{\OO}(\Lambda)$ as 
		$\Lambda^{\rm e}$-lattices. Note that if $\alpha\in \Aut_\OO(\Lambda)$, then $\id\otimes \alpha\in \Aut_{\OO}(\Lambda^{\rm e})$, and $\Lambda_{\alpha}$ (that is, the $\Lambda$-$\Lambda$-bimodule $\Lambda$ twisted by $\alpha$ on the right) is the same as $\Lambda_{\id\otimes \alpha}$ (that is, the right $\Lambda^{\rm e}$-module $\Lambda$ twisted by $\id\otimes \alpha$). Hence 
		$\Ext^1_{\Lambda^{\rm e}}(\Lambda_\alpha, \Lambda_\alpha)\cong \Ext^1_{\Lambda^{\rm e}}(\Lambda, \Lambda)=\HHH^1(\Lambda)$, and the latter is zero by assumption. That is, the elements of $\Pic_\OO(\Lambda)$ are rigid $\Lambda^{\rm e}$-lattices of $\OO$-rank equal to the $\OO$-rank of $\Lambda$. By Theorem~A there are only finitely many such $\Lambda^{\rm e}$-lattices, up to isomorphism.
	\end{proof}
	Proposition~\ref{prop det pic} is actually just the combination of Theorem~B and Weiss' criterion.
	\begin{proof}[Proof of Proposition~\ref{prop det pic}] 
		Consider $(\OO G b)^{\rm e}$ as a block of $\OO (G^{\rm op}\times G)$.
		Let $M$ be an {$(\OO G b)^{\rm e}$-module} representing an element of $\operatorname{Pic}(\OO Gb)$. By Weiss' criterion (see \cite{WeissRigidity}, and \cite{WeissArbitraryCoeffs} for a version allowing $\OO$ as a coefficient ring) our $M$ has trivial source if and only if the lattice of $P$-fixed points taken on the left
		\begin{equation}		
		{^{P}M} \cong {^P(\OO Gb\otimes _{\OO G b} M)} \cong {(^P\OO Gb)\otimes _{\OO G b} M}   \cong \OO [P\backslash G]\cdot b\otimes_{\OO Gb} M  
		\end{equation} 
		has trivial source as an $\OO Gb$-module, that is, is $p$-permutation.  That proves the first part. 
		
		Now if $M$ represents an element of $\operatorname{Picent}(\OO Gb)$, then  
		$\OO[P\backslash G]\cdot b\otimes_{\OO Gb}M$ has the same character as $\OO[P\backslash G]\cdot b$, and isomorphic endomorphism ring. Moreover, we clearly have 
		\begin{equation}
		\Ext^1_{\OO G b}(\OO[P\backslash G]\cdot b\otimes_{\OO Gb}M, \OO[P\backslash G]\cdot b\otimes_{\OO Gb}M)\cong \Ext^1_{\OO G b}(\OO[P\backslash G]\cdot b, \OO[P\backslash G]\cdot b)=0.
		\end{equation}
		Hence, by our uniqueness assumption, we have 
		$\OO[P\backslash G]\cdot b\otimes_{\OO Gb}M \cong \OO[P\backslash G]\cdot b$ as $\OO Gb$-modules, which, by the above, implies that $M$ lies in $\mathcal T(\OO G b)$.
	\end{proof}

	To finish, let us briefly mention the following nice consequence of Theorems~A~and~B, even though it is implied by \cite[Theorem (38.6)]{Thevenaz}, a theorem due to Puig.
	\begin{prop}
		Let $\{\OO G_ib_i\}_{i\in I}$ (for some index set $I$) be a family of block algebras defined over $\OO$ with fixed defect group $P$, each of which  Morita equivalent to some fixed $\OO$-algebra $A$ by means of some fixed $\OO G_ib_i$-$A$-bimodule $M_i$. If there is a bound, independent of $i$,  on the dimension of $L\otimes_{\OO G_ib_i}M_i$ for indecomposable $p$-permutation $\OO G_ib_i$-modules $L$, then the $\OO G_ib_i$ split into finitely many equivalence classes with respect to splendid Morita equivalence. 
	\end{prop}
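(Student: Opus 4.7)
The plan is to reduce the statement to a finiteness assertion about source algebras. Since two blocks with a common defect group $P$ are splendidly Morita equivalent if and only if their source algebras are isomorphic as interior $P$-algebras, it suffices to show that the source algebras $A_i = i_i \OO G_i b_i i_i$ form only finitely many isomorphism classes as interior $P$-algebras, where $i_i \in (\OO G_i b_i)^P$ is a source idempotent and $\varphi_i \colon \OO P \to A_i$, $p \mapsto p i_i$, is the structural map.

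First I will bound $\dim_\OO A_i$ uniformly in $i$. Decompose $i_i = \sum_{l=1}^{r_i} e_l$ as a sum of primitive orthogonal idempotents of $\OO G_i b_i$. The group $P$ permutes the $e_l$ by conjugation, and primitivity of $i_i$ in $(\OO G_i b_i)^P$ forces this action to be transitive, whence $r_i \leq |P|$. Hence the source module $L_i = \OO G_i b_i i_i$ has at most $|P|$ indecomposable $p$-permutation summands as an $\OO G_i b_i$-module, and by hypothesis each of them has image of $\OO$-dimension at most some fixed $N$ under $-\otimes_{\OO G_i b_i} M_i$. Morita invariance then yields $\dim_\OO A_i = \dim_\OO \End_A(L_i \otimes_{\OO G_i b_i} M_i) \leq (|P| N)^2$. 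Since $L_i$ is projective as $\OO G_i b_i$-module, $L_i \otimes_{\OO G_i b_i} M_i$ is rigid as an $A$-module and has uniformly bounded $\OO$-rank, so applying Theorem~A to the $\OO$-order $A$ leaves only finitely many possibilities for its isomorphism class; consequently only finitely many $\OO$-algebras $B_1,\ldots,B_s$ (up to isomorphism) can occur as some $A_i \cong \End_A(L_i \otimes_{\OO G_i b_i} M_i)^{\opp}$.

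To upgrade this to finiteness as interior $P$-algebras, for each $B = B_t$ I must show that the set of unital $\OO$-algebra homomorphisms $\varphi \colon \OO P \to B$ splits into only finitely many orbits under the post-composition action of $\Aut_\OO(B)$. Two such homomorphisms lie in the same $\Inn(B)$-orbit precisely when the induced $\OO P \otimes_\OO B^{\opp}$-module structures on the right regular $B^{\opp}$-module $B$ are isomorphic; since $\OO P \otimes_\OO B^{\opp}$ is an $\OO$-order in the separable $K$-algebra $KP \otimes_K (K \otimes_\OO B^{\opp})$, the Jordan--Zassenhaus theorem supplies finitely many isomorphism classes of such lattices of $\OO$-rank $\dim_\OO B$, giving finiteness of the set of $\Inn(B)$-orbits. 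On the other hand $B$ is Morita equivalent to a block, so $\HHH^1(B)=0$, and Theorem~B (equivalently, Corollary~\ref{cor pic finite}) gives $|\Out_\OO(B)|<\infty$; the further quotient by the finite $\Out_\OO(B)$-action is again finite. Combining everything yields finitely many isomorphism classes of interior $P$-algebras $(A_i,\varphi_i)$, hence finitely many splendid Morita equivalence classes of the $\OO G_i b_i$. The main obstacle is the bound $r_i \leq |P|$: it is this step which converts the per-module hypothesis on $\dim L \otimes_{\OO G_i b_i} M_i$ into a uniform bound on $\dim A_i$, after which the remainder is a combination of Theorem~A (for the underlying $\OO$-algebras), Theorem~B (for the outer automorphism groups), and classical Jordan--Zassenhaus (for interior $P$-algebra structures modulo inner conjugation).
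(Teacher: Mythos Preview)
Your reduction to source algebras is a reasonable strategy, and the endgame using Jordan--Zassenhaus together with Theorem~B is sound. The argument breaks, however, at the step you yourself flag as the crux: the bound $r_i\leq |P|$.

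The claim that ``$P$ permutes the $e_l$ by conjugation'' is not justified, and in fact cannot hold in the form you need. Conjugating the decomposition $i_i=\sum_l e_l$ by $p\in P$ yields another primitive orthogonal decomposition of $i_i$, but there is no reason for it to be a permutation of the \emph{same} idempotents. More damagingly, conjugation by $p\in P\subseteq G_i$ is an \emph{inner} automorphism of $\OO G_ib_i$, so it fixes every conjugacy class of primitive idempotents. Thus, even if one could arrange a $P$-stable decomposition, a transitive $P$-action would force all $e_l$ to be $G_i$-conjugate, hence the source module $L_i$ would be isotypic. Since $L_i$ is a progenerator this would mean the block has a single simple module, which is certainly not true in general. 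So no choice of decomposition can make your transitivity argument work once the block has at least two simple modules. Without a bound on $r_i$ you get no uniform bound on $\dim_\OO A_i$, and the subsequent application of Theorem~A to the $A$-lattices $L_i\otimes_{\OO G_ib_i}M_i$ fails (these are projective $A$-modules of potentially unbounded rank).

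The paper sidesteps this entirely. Rather than bounding source algebras, it applies Theorem~A to the \emph{indecomposable summands} of $\OO[P\backslash G_i]\cdot b_i\otimes_{\OO G_ib_i}M_i$: each of these is rigid (being the image of a $p$-permutation module under a Morita equivalence) and has $\OO$-rank at most the given bound, so only finitely many isomorphism types occur. Partition $I$ according to which set of isomorphism types appears. For $i,i'$ in the same part, the bimodule $M_i\otimes_A M_{i'}^\vee$ sends summands of $\OO[P\backslash G_i]\cdot b_i$ to summands of $\OO[P\backslash G_{i'}]\cdot b_{i'}$, and Weiss' criterion (as in the proof of Proposition~\ref{prop det pic}) then shows $M_i\otimes_A M_{i'}^\vee$ is $p$-permutation. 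Puig's theorem finishes the argument after a further finite refinement by $\Aut(P)$. No bound on the multiplicities, and hence none on $\dim_\OO A_i$, is needed.
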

	\begin{proof}
		Note that by assumption $P$ is a subgroup of $G_i$ for every $i\in I$. Equivalently, one could also assume that there is a fixed embedding $P\hookrightarrow G_i$ for each $i$, but we are going to take the former point of view. 
		By Theorem~A there are only finitely many rigid $A$-lattices of rank smaller than the given bound on images of indecomposable $p$-permutation modules. Hence $I$ splits up into finitely many sets $I_1,\ldots, I_d$ such that 
		$I=\bigcup_{j=1}^d I_j$ and, for every $i\in I_j$, the $A$-module $\OO[P\backslash G_i]\cdot b_i\otimes_{\OO G_ib_i} M_i$ has the same indecomposable summands as some fixed $A$-module $L_j$. It follows that if $i,i'\in I_j$, then $M_i\otimes_A M_{i'}^\vee$ maps the indecomposable summands of $\OO[P\backslash G_i]\cdot b_i$ to the indecomposable summands of $\OO[P\backslash G_{i'}]\cdot b_{i'}$. One can show using the same argument as in the proof of Proposition~\ref{prop det pic} that  $M_i\otimes_{A}M_{i'}^\vee$ is a $p$-permutation $\OO(G_i^{\rm op}\times G_{i'})$-module. By \cite[7.5.1]{PuigBook} (or, independently, by results of \cite{ScottUnpublished}) this implies that the $M_i\otimes_A M_{i'}^\vee$ for $i,i'$ in a fixed $I_j$ are splendid up to restriction along an automorphism of $P$, of which there are only a finite number. That is, each $I_j$ splits into finitely many subsets such that the blocks parametrised by any one of the subsets are pair-wise source algebra equivalent by means of the $M_i\otimes_{A}M_{i'}^\vee$.
	\end{proof}

	\paragraph*{Acknowledgements} I would like to thank Radha Kessar and Markus Linckelmann for pointing the finiteness problem for Picard groups out to me, and for helpful comments on a first version of this preprint.

	\bibliography{refs}
	\bibliographystyle{alpha}	
\end{document}